\title{Infinitely many knots with non-integral trace}
\author{A. W. Reid}
\author{N. Rouse}
\address{\newline Department of Mathematics,
\newline Rice University, 
\newline Houston, TX 77005, USA}
\email{alan.reid@rice.edu}
\email{nicholas.rouse@rice.edu}
\thanks{First author supported by NSF grant DMS $1812397$} 
\def\-{\overline}
\def\G{\Gamma}
 \def\H{\mathbb{H}}
 \def\Z{\mathbb{Z}}
 \def\R{\mathbb{R}}
 \def\Q{\mathbb{Q}} 
 \def\C{\mathbb{C}}
\def\qed{ $\sqcup\!\!\!\!\sqcap$}
\def\tr{\mbox{\rm{tr}}\, }
\def\G{\Gamma}
\def\<{\langle}
\def\>{\rangle}
\DeclareMathOperator{\SL}{SL} 
\DeclareMathOperator{\GL}{GL}
\newtheorem{theorem}{Theorem}[section]
\newtheorem{lemma}[theorem]{Lemma}
\newtheorem{proposition}[theorem]{Proposition}
\theoremstyle{definition}
\begin{document}

\begin{abstract}  
We prove that there are infinitely many non-homeomorphic hyperbolic knot complements $S^3\setminus K_i = \H^3/\G_i$ for which $\G_i$ contains elements whose trace is an algebraic non-integer.
\end{abstract}

\subjclass{57M25}
\keywords{hyperbolic knot, non-integral trace, closed embedded essential surface}
\maketitle

%%%%%%%%%%%%%%%%%%%%%%%%%%%%%%%%%%%%%%%%%%%%%%%%%%%%%
%
%%%%%%%%%%%%%%%  INTRO  %%%%%%%%%%%%%%%%%%%%%%%%%%%%%%%%%
%
%%%%%%%%%%%%%%%%%%%%%%%%%%%%%%%%%%%%%%%%%%%%%%%%%%%%%
\section{Introduction}\label{intro} 
A basic consequence of Mostow-Prasad Rigidity is that if $M=\H^3/\G$ is an orientable hyperbolic 3-manifold of finite volume, then the traces of the elements in $\G$ are algebraic numbers (see \cite[Theorem 3.1.2]{MR}). In addition,
if there is an element $\gamma\in\G$ for which the trace is an algebraic {\em non-integer}, then Bass's Theorem \cite{Bass} implies that $M$ contains a closed embedded essential surface.  The main result of this
note is the following for which we introduce some notation.  Let $K$ be a hyperbolic knot (or link) 
such that $S^3\setminus K= \H^3/\G$, say that $K$ has {\em non-integral trace} (resp. {\em integral trace}) if $\G$ contains an element whose trace is an algebraic non-integer (resp. there is no such element). 

\begin{theorem}
\label{main}
There are infinitely many distinct knots with non-integral trace.\end{theorem}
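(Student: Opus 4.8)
The plan is to obtain the knots of Theorem~\ref{main} by repeatedly twisting a single knot along an unknotted circle. Fix a hyperbolic two-component link $L=K_0\cup C\subset S^3$ with $C$ an unknot, and for $n\in\Z$ let $K_n\subset S^3$ be the knot obtained from $K_0$ by inserting $n$ full twists along a disc bounded by $C$; equivalently, $S^3\ssm K_n$ is the $1/n$ Dehn filling of the cusp of $C$ in $S^3\ssm L$. By Thurston's hyperbolic Dehn surgery theorem, $S^3\ssm K_n=\H^3/\G_n$ is hyperbolic for all $|n|$ large, and since by Neumann--Zagier $\vol(S^3\ssm L)-\vol(S^3\ssm K_n)$ is asymptotic to a positive constant times $n^{-2}$, the volumes are eventually strictly monotone and infinitely many of the $K_n$ are pairwise non-homeomorphic. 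It therefore suffices to produce one link $L$ for which infinitely many of the $K_n$ have non-integral trace, and that is where all the difficulty lies.

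The element that will detect non-integral trace is the core geodesic $c_n\in\G_n$ of the surgery solid torus; since $K_0$ is disjoint from $C$, the knot $K_n$ misses this solid torus and $c_n$ really represents an element of $\pi_1(S^3\ssm K_n)$ (for large $n$ it is a short geodesic). Following the meridian $\mu_C$ and longitude $\lambda_C$ of $C$ through the Dehn filling shows that the image of $\lambda_C$ is $c_n^{\pm1}$ and that $\mu_C=\lambda_C^{-n}$ in $\pi_1(S^3\ssm K_n)$, so, writing $\rho_n$ for the holonomy of $S^3\ssm K_n$ and $\chi_n$ for its character, the eigenvalues $M_C,L_C$ of $\mu_C,\lambda_C$ at $\rho_n$ satisfy $M_C=L_C^{-n}$ and
\[
\tr\rho_n(c_n)=L_C+L_C^{-1}.
\]
Since $\chi_n$ lies on the curve cut from the canonical component of the $\PSL(2,\C)$-character variety of $\pi_1(S^3\ssm L)$ by the condition that the cusp of $K_0$ be parabolic, the pair $(M_C,L_C)$ at $\chi_n$ is a zero of the corresponding $A$-polynomial $A_C(M,L)\in\Z[M,L]$ of $L$ relative to the cusp of $C$; combined with $M_C=L_C^{-n}$, this shows that $L_C=L_C^{(n)}$ is a root, lying near $1$, of the integer polynomial $P_n(x)$ obtained by clearing denominators in $A_C(x^{-n},x)$.

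The goal thus reduces to: $L_C^{(n)}$ is an algebraic \emph{non}-integer for infinitely many $n$; the core geodesic then witnesses that $K_n$ has non-integral trace. This cannot be read off from $\chi_n\to\chi_L$ (under which $L_C^{(n)}\to1$), because the algebraic integers are dense in $\C$, so non-integrality is invisible archimedeanly and must be forced $\mathfrak p$-adically. The idea is to choose $L$ so that $A_C$ is \emph{not monic} --- so that the extreme coefficient of $A_C$ governing an ideal point at which $L_C\to\infty$ (the Culler--Shalen mechanism by which non-monicity produces a closed essential surface in $S^3\ssm L$) is an integer $m$ with $\abs{m}>1$ --- and then to fix a rational prime $p\mid m$. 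For all large $n$ the leading coefficient of $P_n(x)$ equals $\pm m$, hence is divisible by $p$; the Newton polygon of $P_n$ over $\Q_p$ then has a segment of nonzero slope, so $P_n$ has a root $r$ that is not a unit at some prime $\mathfrak p$ of $\Q(r)$ above $p$, and then $v_{\mathfrak p}(r+r^{-1})<0$. I expect the main obstacle to be certifying that this non-integral root is the \emph{geometric} one $L_C^{(n)}$ --- that the non-integrality occurs at the discrete faithful character of $K_n$ itself, and not merely at a Galois conjugate living on an auxiliary component of the character variety --- and it is here that one should expect to lose some values of $n$ and obtain only infinitely many. The cleanest route is to show that $P_n$ is irreducible of the generic degree for infinitely many $n$, so that all its roots are Galois-conjugate and hence simultaneously non-integral; this ought to follow from the ``genericity'' of Dehn filling points on the canonical component, in the spirit of Hilbert irreducibility. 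Failing that, one argues directly --- using the Neumann--Zagier expansion of $L_C^{(n)}$ about $L_C=1$ together with a $\mathfrak p$-adic estimate --- that the degeneration recorded by the Newton polygon is carried by the geometric branch.

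Finally, one must exhibit a link $L=K_0\cup C$, with $C$ unknotted and $S^3\ssm L$ hyperbolic, whose $A$-polynomial $A_C$ is provably non-monic and for which the book-keeping above can be made explicit. Such $L$ should not be hard to find --- for instance by engineering into $S^3\ssm L$ a closed essential surface that meets the torus of $C$, as happens for suitable augmented or pretzel links --- after which the offending coefficient $m$, the prime $p$, and the relevant set of $n$ can be pinned down by a finite computation with $A_C$.
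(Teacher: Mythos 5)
Your construction is genuinely different from the paper's --- the paper builds its knots as $d$-fold cyclic branched covers of $S^3$ over the unknotted component (equivalently, $d$-fold cyclic covers of the $(d,0)$-orbifold fillings), and detects non-integrality in the trace of a fixed group element rather than of the surgery core --- but both plans funnel into the same central difficulty, and that is where your argument has a real gap. Everything up to the Newton polygon step is fine: if the relevant corner coefficient of $A_C$ is divisible by $p$ (and some other coefficient is not), then $P_n$ has \emph{some} root of negative valuation at a prime above $p$. But, as you say yourself, this proves nothing until the \emph{geometric} root $L_C^{(n)}$ is known to be Galois-conjugate to that root, and neither of your proposed fixes works as stated. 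Irreducibility of $P_n(x)$, i.e.\ of the denominator-cleared $A_C(x^{-n},x)$, is not an instance of Hilbert irreducibility: you are not specializing a fixed polynomial at varying rational points of a parameter line, you are performing a monomial substitution whose exponent grows with $n$, so $\{P_n\}$ is a lacunary family to which Hilbert-type theorems simply do not apply. (Results of the needed kind exist --- Schinzel's theorems on the reducibility of $F(x,x^n)$, and toric/cyclotomic variants of Hilbert irreducibility --- but they carry nontrivial hypotheses and exceptional sets, and ``genericity of Dehn filling points'' is not a proof.) The fallback is worse: the Neumann--Zagier expansion of $L_C^{(n)}$ about $1$ is a statement at the archimedean place and, as you correctly note two sentences earlier, non-integrality is invisible archimedeanly; there is no ``geometric branch'' of $P_n$ over $\Q_p$ for the Newton polygon segment to be carried by, since the primes $\mathfrak{p}$ live in number fields that change with $n$ and are not selected by the complex embedding. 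Finally, no link $L$ is exhibited, so even the finite verifications (which corner coefficient is non-unit, absence of cancellation in $P_n$) are not carried out.

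For comparison, the paper confronts exactly this issue and resolves it by arranging the specialization to be cyclotomic rather than lacunary: $(d,0)$-orbifold filling on the unknotted component $J$ sets the meridian trace to $t=2\cos(2\pi/d)$, a unit for odd $d$, so the curve polynomial $R(t,Y)$, with leading term $16tY^{24}$, specializes to a polynomial whose leading coefficient is a non-unit; its irreducibility over $\Q(\cos(2\pi/d))$ for infinitely many odd $d$ is then a specialization-at-roots-of-unity statement, which the Dvornicich--Zannier theorem reduces to checking absolute irreducibility of the finitely many polynomials $S(X^m,Y)$, $m\leq 24$, via a Newton polygon criterion. Non-integrality then descends to the knot groups because they are finite-index subgroups of the orbifold groups. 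To salvage your twisting construction you would need the analogous input: a proof that $A_C(x^{-n},x)$ is irreducible (or at least that its geometric root lies on a non-monic irreducible factor) for infinitely many $n$, via a Schinzel-type lacunary irreducibility theorem with verified hypotheses or a direct $\mathfrak{p}$-adic argument; neither is supplied.
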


\noindent Some examples of such knots were already known from the knot tables \cite{Rolf} (e.g. $10_{98}$ and $10_{99}$ as can be readily checked with SnapPy \cite{CDW} or Snap \cite{Snap}) and in \S \ref{knot12},  we include a list of knots up through $12$ crossings that we are able to confirm have non-integral trace.

In the case of links,  it was shown in \cite{ChD} that there exist infinitely many $2$ component hyperbolic links that have non-integral trace.
\\[\baselineskip]
\noindent{\bf Acknowledgements:}~{\em We are very grateful to Shelly Harvey for pointing out the reference \cite{Mur} to us. We are also very grateful to Ken Baker, Neil Hoffman and Josh Howie for comments on an earlier version of this paper that led to the revised \S \ref{questions} below. We would particularly like to thank Howie for allowing us to include his proof that the knot complements constructed in the proof of Theorem \ref{main} contain a closed embedded essential surface that carries an essential simple closed curve isotopic to a meridian, as well as for the tangle decompositions shown in Figure \ref{tangleDecomp}.}

\section{The basic construction}
\label{basic}

Our basic construction is easy to explain. First, for convenience, we recall the following interpretation of the linking number (see \cite[p. 132]{Rolf}). Let $L=J\cup K \subset S^3$ be a $2$-component link,
and let $[\gamma]$ denote a generator of $H_1(S^3\setminus J, \Z)\cong \Z$. The homology class $[K] \in H_1(S^3\setminus J, \Z)$ is represented by $n.[\gamma]$ for some $n\in \Z$, 
and the {\em linking number} of $J$ and $K$ is $n$.

Now let $L=J\cup K \subset S^3$ be a $2$-component hyperbolic link with $S^3\setminus L\cong \H^3/\G$, where $J$ is the unknot and for which the linking number between $J$ and $K$ is $2$ (after a choice of orientation of $J$ and $K$). Now cyclic branched covers of $S^3$ branched over $J$ are all homeomorphic to $S^3$. Moreover, using the definition of the linking number given above, we see that for $d$ odd, the preimage of $K$ in the %%AR2:added d-fold
$d$-fold cyclic branched cover is connected. That is to say, 
such $d$-fold cyclic branched covers of $S^3$ branched over $J$ will be knot complements in $S^3$. For $d$ large enough the knots will be hyperbolic as can be seen from Thurston's Dehn Surgery Theorem using the description of these branched covers as orbifold $(d,0)$-Dehn filling on $J$, and subsequent passage to the appropriate $d$-fold cyclic cover of the orbifold.

We will also insist that there exists $\alpha\in\G$ whose trace is an algebraic non-integer. As noted above, it follows that $S^3\setminus L$ contains a closed embedded essential surface.
That the knot complements constructed in the previous paragraph also contain a closed embedded essential surface follows from \cite{GL}, however, it is more subtle to prove that the knots have non-integral trace.  To do this, we need to
analyze the behavior of $\chi_\rho(\alpha)$ on the canonical component of $L$.  In particular, by understanding how $\chi_\rho(\alpha)$ varies on a particular subvariety of the
canonical component of $L$ we will prove that at those characters $\chi_d$ corresponding to $(d,0)$-Dehn filling on $J$ (and where the cusp corresponding to $K$ remains a cusp), 
$\chi_d(\alpha)$ remains an algebraic non-integer.  As is well-known, since non-integral trace is preserved by passage to finite index subgroups (see for example \cite[Corollary 3.1.4]{MR}), it follows that the knots constructed in the previous paragraph have non-integral trace.

\section{Details about \texorpdfstring{$L$}{L}}
\label{details}

The link $L$ we use is {\tt{L11n106}} from Thistlethwaite's table of 2 component links through $11$ crossings \cite{TT}, and shown in Figure 1. As in \S \ref{basic},  $J$ will denote the unknotted
component of $L$, and $K$ the knotted component of $L$, which in this case is the knot $7_6$
of the tables of \cite{Rolf}. The volume of $S^3\setminus L$ is approximately $10.666979133796239$.  We note that several examples were tested before the plan outlined in \S \ref{basic} was pushed through to completion (see
\S \ref{remsnonint} for a discussion of one example that failed).

\begin{figure}[h]
\label{LinkDiagram}
\includegraphics[scale=0.5]{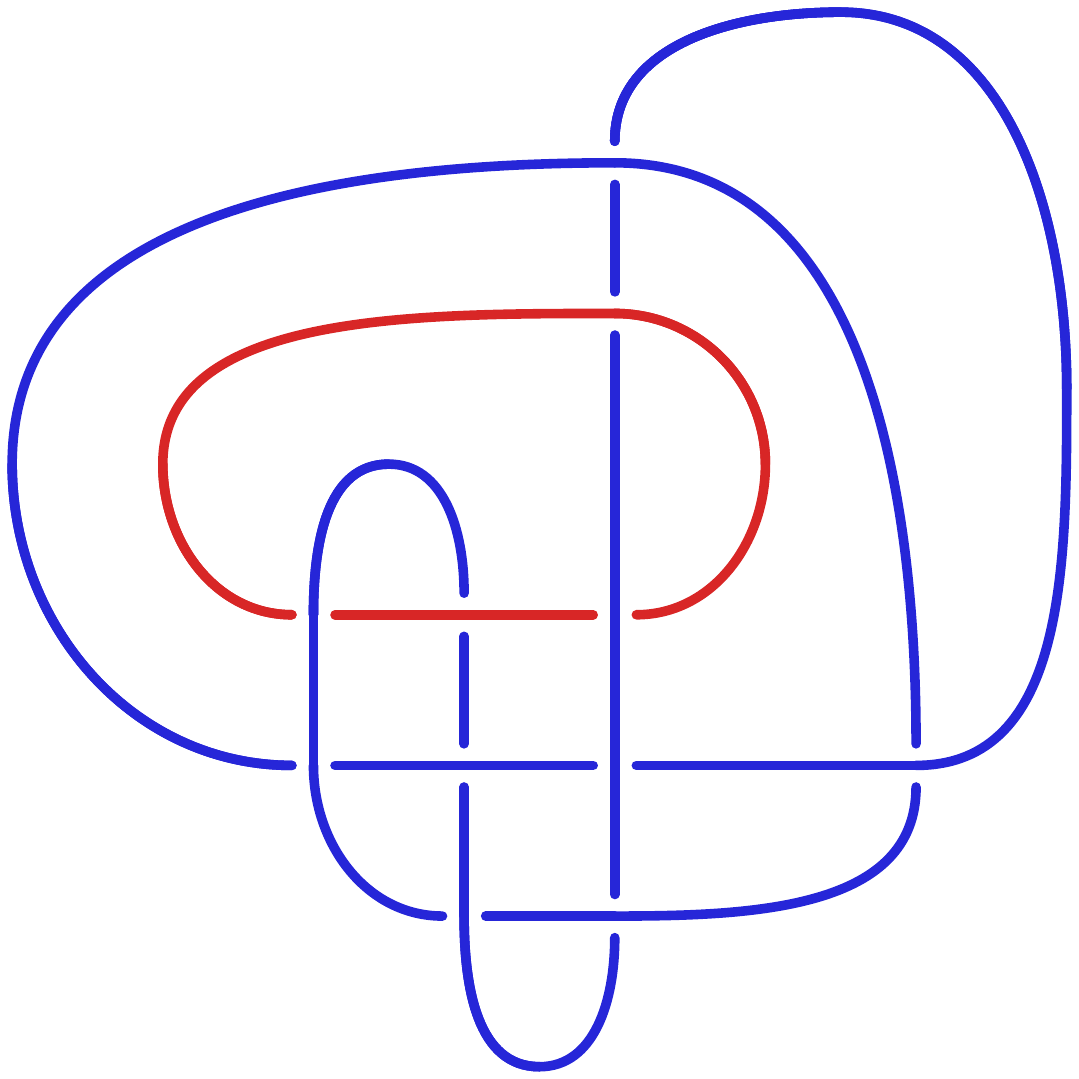}
\caption{The link L11n106. Diagram produced in SnapPy.}
\end{figure}

In the subsections below we gather the details about $S^3\setminus L = \H^3/\G$ that will be used, together with analysis of characters.   We made heavy use of Snap \cite{Snap}, SnapPy \cite{CDW} and Mathematica \cite{Math} in our calculations. 

\subsection{Presentation for \texorpdfstring{$\G$}{Gamma}} 
\label{pres}
\noindent From SnapPy a presentation for $\G$ is given as follows.
\begin{verbatim}
<a,b | abbbaBAbaabABaBAbaabABabbbaBAbaabABBBAbaBAABabAbaBAABabABBBAbaBAAB=1>
\end{verbatim}
\noindent where $A$ and $B$ denote the inverses of $a$ and $b$ respectively. Also from SnapPy meridians for $J$ and $K$ are given by
\begin{verbatim}
J: baabABabbbaBAABabABBBAbaBAABabbbaBA
K: ba
\end{verbatim}
 
\noindent This can be checked by performing $(1,0)$-Dehn filling on $J$ which SnapPy shows results in a manifold homeomorphic to the complement of the knot $7_6$.

Using SnapPy (or Snap) it can be checked that the trace-field of $\Gamma$ is $\Q(\sqrt{-7})$ and that $\tr(a) = \pm (13 + 7\sqrt{-7})/8 $ and $\tr(b)= \pm(17+3\sqrt{-7})/8$ 
and so 
both are algebraic non-integers (this can also be checked using the character variety calculations below).

\subsection{Character variety calculations}
\label{character}
Since $\G$ is $2$-generator, we can conjugate any irreducible representation $\rho:\G\rightarrow \SL(2,\C)$ so that $\rho(a)$ fixes $\infty$ and $\rho(b)$ fixes $0$.  Since we are interested in those
representations $\rho$ for which the meridian of $K$ (identified as $ba$ in \S \ref{pres}) continues to be parabolic, we can normalize so that $\chi_\rho(ba)=-2$ (where the minus sign is chosen so as to be consistent with the output produced by SnapPy). With this arrangement we have:

$$a\mapsto \left(
\begin{array}{cc}
 x & 1 \\
 0 & \frac{1}{x} \\
\end{array}
\right)~\hbox{and}~
b\mapsto \left(
\begin{array}{cc}
 y & 0 \\
 -x y-2-\frac{1}{x y} & \frac{1}{y} \\
\end{array}
\right)$$

\noindent To handle evaluation in Mathematica of the relation on the matrices, we split it up as follows:

\begin{verbatim}
w1 = a.b.b.b.a.B.A.b.a.a.b.A.B;
w2 = a.B.A.b.a.a.b.A.B.a.b.b.b.a.B.A.b;
w3 = a.a.b.A.B.B.B.A.b.a.B.A.A.B.a.b.A.b.a;
w4 = B.A.A.B.a.b.A.B.B.B.A.b.a.B.A.A.B;
\end{verbatim}

\noindent and evaluate

\begin{verbatim}
rel=Factor[w1.w2-Inverse[w3.w4]]
\end{verbatim}

\medskip 

\noindent Setting $X=\chi_\rho(a)$ and $Y=\chi_\rho(b)$ we find that $X$ and $Y$ satisfies $P(X,Y)=0$ where:

\begin{equation*}
\begin{aligned}
P(X,Y)&=X^8 Y+7 X^7 Y^2-2 X^7+21 X^6 Y^3-7 X^6 Y+35 X^5 Y^4-3 X^5 Y^2-8 X^5+35 X^4 Y^5 \\
&\quad+20 X^4Y^3-29 X^4 Y+21 X^3 Y^6+40 X^3 Y^4-39 X^3 Y^2-7 X^3+7 X^2 Y^7+33 X^2 Y^5 \\
&\quad-23 X^2 Y^3-17X^2 Y+X Y^8+13 X Y^6-5 X Y^4-14 X Y^2+X+2 Y^7-4 Y^3
\end{aligned}
\end{equation*}

It is easy to check using Mathematica that $P(X,Y)$ is irreducible over $\Q$, and using the feature {\tt{Factor[*, Extension -> All]}}, Mathematica can check that this is irreducible over $\C$. Indeed, our computations show
that there are two subvarieties in the $\SL(2,\C)$-character variety of $L$, where $ba$ is kept parabolic, and the one above was identified by using the traces of $a$ and $b$ given at the faithful discrete representation. 

Set $t=\chi_\rho(m_0)$ where $m_0$ is the meridian of $J$ described above.  This results in a polynomial $Q(t,X,Y)$ displayed in \S \ref{additional}, and taking the resultant of $P(X,Y)$ and $Q(t,X,Y)$  to eliminate $X$, 
yields the polynomial
$R(t,Y)$ displayed in \S \ref{additional} with highest degree term being $16tY^{24}$.  Thus, if at algebraic integer 
specializations of $t$, the polynomial $R(t,Y)$ remains irreducible, then $Y$ is an algebraic non-integer.
Note that $R(-2,Y)$ is reducible, factoring as
\begin{equation*}
\begin{aligned}
R(-2,Y) &=\left(Y^9+15 Y^8+104 Y^7+435 Y^6+1205 Y^5+2285 Y^4+2956 Y^3+2506 Y^2+1257Y+283\right)^2\\
&\quad \left(2 Y^2-5 Y+4\right) \left(4 Y^2-17 Y+22\right) \left(4 Y^2-11 Y+8\right)
\end{aligned}
\end{equation*}
with the factor corresponding to the complete structure being $4 Y^2-17 Y+22$.

The proof of Theorem \ref{main} will be completed by the following proposition, the proof of which is given in \S \ref{irred}.  For $d$ odd, perform $(d,0)$-Dehn filling on $J$, which amounts to setting $t=2\cos(2\pi/d)$ in $R(t,Y)$.
Now for $d$ odd, $2\cos(2\pi/d)$ is a unit.  To see this, let $\Phi_d(x)$ denote the $d$-th cyclotomic polynomial, and let $\zeta_d$ be a primitive $d$th root of unity. Then $2\cos2\pi/d= \zeta_d+1/\zeta_d$ is a unit if and only if
$\zeta_d^2+1$ is a unit.  By \cite[Lemma 2.5]{Len} this holds if and only $\Phi_d(i)$ is a unit, and this can be deduced from \cite[Lemma 23]{BHM} for example.

\begin{proposition}
\label{specialize}
For infinitely many odd $d>1$, the polynomial $R(2\cos(2\pi/d),Y)$ is irreducible over $\Q(\cos(2\pi/d))$.\end{proposition}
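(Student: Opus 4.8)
The plan is to establish irreducibility of $R(2\cos(2\pi/d), Y)$ over the real cyclotomic field $\Q(\cos(2\pi/d))$ by a reduction-mod-$\mathfrak p$ argument combined with a Galois-theoretic bootstrap from a single well-chosen prime. First I would compute $R(2\cos(2\pi/d),Y)$ more explicitly: since $2\cos(2\pi/d) = \zeta_d + \zeta_d^{-1}$ and the leading term of $R(t,Y)$ is $16tY^{24}$, the specialized polynomial has degree $24$ in $Y$ and its leading coefficient $16(\zeta_d + \zeta_d^{-1})$ is (up to the power of $2$, which is harmless) a unit by the cyclotomic unit argument already recorded in the text. The key observation is that $R(t,Y)$, viewed as a polynomial in $Y$ over $\Z[t]$, has content $1$ (one can check $R(-2,Y)$ is primitive, or directly inspect the polynomial in \S\ref{additional}), so Gauss's lemma lets us work with integral models.

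The main step is the following. Pick a rational prime $p$ and an odd $d$ such that: (i) $p$ splits in the specified way relative to $d$ — concretely, choose $p$ with $p \equiv 1 \pmod d$ so that $\zeta_d$ reduces to an element of $\F_p$, i.e. the residue field of a prime $\mathfrak p$ of $\Q(\cos(2\pi/d))$ above $p$ is $\F_p$; (ii) the reduction $\bar R(\bar t, Y) \in \F_p[Y]$, where $\bar t$ is the image of $2\cos(2\pi/d)$, is irreducible, or factors into pieces whose degrees are coprime. If $\bar R(\bar t, Y)$ is irreducible over $\F_p$ and the leading coefficient is a $\mathfrak p$-unit, then $R(2\cos(2\pi/d),Y)$ is irreducible over $\Q(\cos(2\pi/d))$ — this is the standard reduction-mod-prime irreducibility criterion. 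The point is that this only needs to be verified for \emph{one} value of $d$ and one $p$ to get started; but to obtain \emph{infinitely many} $d$, I would instead argue as follows. Consider $R(t,Y) \in \Q(t)[Y]$, which by the discussion in \S\ref{character} cuts out (an open subset of) the canonical component, and is a polynomial defining a function field extension $\Q(t) \subset \Q(t)[Y]/(R)$ of some degree $\le 24$; I would identify this extension and compute its Galois group $G$ over $\overline{\Q(t)}$ (the geometric monodromy) and over $\Q(t)$. By Hilbert irreducibility applied over the Hilbertian field $\Q(\cos(2\pi/d))$ — or rather, by a Chebotarev/specialization argument — the specialization $t \mapsto 2\cos(2\pi/d)$ gives an irreducible polynomial precisely when the Frobenius-type element attached to that specialization generates a transitive subgroup of $G$.

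Concretely, I expect the cleanest route is: show $R(t,Y)$ is irreducible over $\Q(t)$ (equivalent to irreducibility over $\C(t)$ if the monodromy is transitive, which the geometric irreducibility of $P(X,Y)$ over $\C$ already suggests, since $R$ is a resultant of geometrically irreducible data); then invoke the fact that for a polynomial $f(t,Y) \in \Q[t,Y]$ irreducible over $\Q(t)$, the set of $t_0 \in \Q$ (or in a fixed number field) for which $f(t_0,Y)$ is reducible is "thin," and more usefully, has density zero among reductions. The specializations $t = 2\cos(2\pi/d)$ do not range over $\Q$, so a bare Hilbert irreducibility theorem does not directly apply; instead I would run the mod-$\mathfrak p$ argument uniformly: for each odd $d$ choose a prime $p \equiv 1 \pmod d$ (infinitely many exist by Dirichlet), reduce, and show that for a positive proportion of such $d$ the reduced polynomial $\bar R(\bar t, Y) \in \F_p[Y]$ is irreducible, using that the factorization type of $\bar R(\bar t, Y)$ over $\F_p$ is governed by the Frobenius conjugacy class in the Galois group of $R$ over $\Q(t)$ evaluated at $t = $ a root of $\Phi_d$ mod $p$, together with a Chebotarev density statement that an irreducible-inducing conjugacy class is hit infinitely often as $(d,p)$ vary.

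The hard part will be step two: pinning down the Galois group $G = \mathrm{Gal}$ of $R(t,Y)$ over $\Q(t)$ precisely enough to guarantee that a transitive (hence irreducibility-forcing) Frobenius class occurs for infinitely many $d$. The degree-$24$ polynomial $R$ arising as a resultant may well be reducible over $\Q(t)$ a priori, or its Galois group could be small; one must actually factor $R(t,Y)$ over $\Q(t)$ (the factorization of $R(-2,Y)$ above into a square times three quadratics is a warning that $R$ specializes very non-generically, and reflects the multiple components/symmetries of the character variety at the complete structure), identify the factor $F(t,Y)$ that specializes at $t=-2$ to the complete-structure factor $4Y^2 - 17Y + 22$, and prove irreducibility for \emph{that} factor. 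So in practice I would replace $R$ by the relevant irreducible factor $F(t,Y) \in \Q(t)[Y]$ throughout, establish that $F$ is geometrically irreducible (irreducible over $\C(t)$) by a monodromy computation or by exhibiting a place of $\Q(t)$ where $F$ is Eisenstein-like or has a totally ramified point, and then conclude via the mod-$\mathfrak p$ reduction that $F(2\cos(2\pi/d),Y)$ — and hence, since its roots are still non-integral, the conclusion about $Y$ — is irreducible over $\Q(\cos(2\pi/d))$ for infinitely many odd $d$. A secondary obstacle is controlling the leading and constant coefficients along the specialization so that no spurious ramification or content appears at the primes $\mathfrak p$ used; this is where the unit property of $2\cos(2\pi/d)$, already established in the excerpt, does essential work.
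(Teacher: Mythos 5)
Your overall strategy --- reduce the infinite family of specializations to Galois/Frobenius data for $R(t,Y)$ over $\Q(t)$ and then certify individual specializations by reduction modulo a prime $\mathfrak p$ --- has a genuine gap at exactly the step that carries all the weight. The specialization points $t=2\cos(2\pi/d)$ live in the fields $\Q(\cos(2\pi/d))$, whose degrees grow with $d$, so neither Hilbert irreducibility over a fixed number field nor a fixed Chebotarev theorem applies; you acknowledge this, but the substitute you propose (``an irreducible-inducing conjugacy class is hit infinitely often as $(d,p)$ vary'') is not a theorem you can invoke, and it does not follow from knowledge of the Galois group of $R$ over $\Q(t)$. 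For a fixed $d$, irreducibility of $\bar R(\bar t,Y)$ over $\F_p$ would force a $24$-cycle in the Galois group of $R(2\cos(2\pi/d),Y)$ over $\Q(\cos(2\pi/d))$, and whether such a cycle exists for infinitely many $d$ is precisely the content of the proposition; generic (over $\Q(t)$) monodromy does not automatically survive specialization at these particular algebraic points, and controlling this for a ``thin'' infinite set of specializations of unbounded degree is the hard analytic/diophantine input your sketch leaves unsupplied. The paper fills exactly this hole with the Dvornicich--Zannier cyclotomic specialization theorem (Theorem \ref{DZannier}): after substituting $t=X+X^{-1}$ and clearing denominators to get $S(X,Y)=X^8R(X+X^{-1},Y)$, irreducibility of $S(\zeta_d,Y)$ over $\Q(\zeta_d)$ (hence over $\Q(\cos(2\pi/d))$) for all but finitely many roots of unity follows once $S(X^m,Y)$ is shown to be irreducible over $\Q^{ab}$ for every $m\le 24$; these are \emph{finitely many} verifications, carried out by checking irreducibility over $\Q$ computationally (indeed by reduction modulo $(X-2,p)$, close in spirit to your mod-$\mathfrak p$ certificates) and upgrading to absolute irreducibility via the Newton polygon criterion of Theorem \ref{absoluteirredubility}, using the vertex $(0,1)$ coming from the monomial $Y$ with coefficient $1$.

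Two smaller points. First, your concern that $R(t,Y)$ might be reducible over $\Q(t)$, prompting a passage to a factor $F(t,Y)$, is unfounded: the $m=1$ verification already shows $S(X,Y)$, and hence $R(t,Y)$, is irreducible (in fact absolutely irreducible); the factorization of $R(-2,Y)$ is just a non-generic specialization and is not an obstruction. Second, your observations that $2\cos(2\pi/d)$ is a unit for odd $d$ and that irreducibility mod a prime $\mathfrak p$ at which the leading coefficient is a unit implies irreducibility over the number field are correct and are the right kind of local input, but without a Dvornicich--Zannier-type theorem (or an equally strong replacement) they certify only finitely many values of $d$, one at a time, and cannot by themselves deliver ``infinitely many odd $d$.''
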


\section{Proving irreducibility}\label{irred}
Our goal in this section is to prove Proposition \ref{specialize}. Rather than working with the polynomial $R(t, Y)$ directly, we will instead consider the polynomial $S(X, Y) = X^8 R(X + X^{-1}, Y)$ (see \S \ref{additional} for an explicit description
of $S$). The reason for making this transformation is the following.
Let $\zeta_d = \exp(2 \pi i /d)$, and note that $S(\zeta_d, Y) = \zeta_d^8 R(2 \cos(2 \pi /d), Y)$, so $S(\zeta_d, Y)$ is irreducible in $\Q(\zeta_d)[Y]$ if and only if $R(2 \cos(2 \pi /d), Y)$ is.  That 
$S(\zeta_d, Y)$ is irreducible in $\Q(\zeta_d)[Y]$ will be established using the following result.

\begin{theorem}\cite[Corollary 1(a)]{DZannier} \label{DZannier}
	Let $k$ be a number field and $k^c$ the field obtained by adjoining all roots of unity to $k$. If $f \in k^c[X,Y]$ and $f(X^m, Y)$ is irreducible in $k^c[X,Y]$ for all positive integers $m \leq deg_Y f$, then $f(\zeta, Y)$ is irreducible in $k^c[Y]$ for all but finitely many roots of unity $\zeta$.
\end{theorem}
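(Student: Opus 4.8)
The plan is to prove the theorem geometrically, by reading irreducibility of a specialization $f(\zeta,Y)$ as connectedness of a fibre in the degree-$n$ cover of the multiplicative group cut out by $f$, and then controlling the torsion fibres by a Diophantine finiteness input; throughout set $n=\deg_Y f$. First I would record that taking $m=1$ in the hypothesis forces $f$ to be irreducible in $k^c[X,Y]$, so that after removing any content in $Y$ we may treat $f$ as irreducible of degree $n$ over $k^c(X)$, and it then defines a finite cover $\pi\colon C\to\mathbb{G}_m$ of degree $n$ over $k^c$, with $C$ the normalization of $\{f=0\}$ and $X$ the coordinate on $\mathbb{G}_m$. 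For a root of unity $\zeta$ that is not a branch point, the fibre $\pi^{-1}(\zeta)$ is a set of $n$ geometric points, and since $\zeta\in k^c$ the factorization of $f(\zeta,Y)$ over $k^c$ records exactly the partition of this fibre into orbits under $\mathrm{Gal}(\overline{k^c}/k^c)$. Thus $f(\zeta,Y)$ is irreducible over $k^c$ if and only if the fibre is a single Galois orbit, and the whole theorem becomes the assertion that the fibre fails to be a single orbit for only finitely many roots of unity.

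Next I would bring in monodromy. Let $G$ be the arithmetic monodromy group, the Galois group of a splitting field of $f$ over $k^c(X)$ realized as a transitive subgroup of $S_n$, and let $G^{\circ}\trianglelefteq G$ be its geometric counterpart over $\overline{k^c}(X)$. The substitution $X\mapsto X^m$ is precisely the pullback of the cover along the power map $[m]\colon\mathbb{G}_m\to\mathbb{G}_m$, and $f(X^m,Y)$ is irreducible over $k^c$ exactly when the finite-index subgroup of $\pi_1(\mathbb{G}_{m,k^c})$ classifying this pullback still acts transitively on the $n$ roots. Because the tame geometric fundamental group of $\mathbb{G}_m$ is generated by a single loop, a connected cover can only disconnect under a power pullback by sharing a cyclic (Kummer) subcover with some $[d]$, and such a subcover has degree at most $n$; hence if any power pullback disconnects, then one already disconnects for some $m\le n$. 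This is exactly why the hypothesis need only be checked for $m\le n$: it says precisely that $C\to\mathbb{G}_m$ contains no subcover of a power map.

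With these reformulations in hand I would argue by contradiction, assuming $f(\zeta,Y)$ is reducible for infinitely many roots of unity $\zeta$. Since $n$ is fixed, only finitely many factorization types occur, so one type, and with it a proper factor of some fixed degree $1\le r<n$, persists on an infinite set $T$ of roots of unity. The correct way to read the splitting at such a $\zeta$ is not through a finite-field Frobenius, since there is none (the residue field at $\zeta$ is the whole infinite field $k^c$), but through the image in $G$ of the evaluation section $\mathrm{Gal}(\overline{k^c}/k^c)\to G$ attached to the $k^c$-rational point $\zeta$; the fibre is a single orbit iff this image is transitive. The engine that should convert ``infinitely many non-generic torsion fibres'' into genuine geometric structure is a finiteness theorem for special points of Ihara--Serre--Tate / Lang type: infinitely many torsion points accumulate on a subvariety of a torus only along torsion cosets of subtori, and these are cut out by exactly the multiplicative relations $X^a=\rho$ that correspond to power maps. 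Running this on the sub-correspondence of $C$ carved out by the persistent factor should force $C\to\mathbb{G}_m$ to factor through some $[d]$ with $d\le n$, contradicting the previous paragraph. The main obstacle, which I expect to absorb most of the work, is precisely this last reduction: because $\overline{k^c}/k^c$ has a large and complicated Galois group and carries no Frobenius, the clean decomposition-group picture has no literal analogue, and the passage from a persistently reducible family of torsion fibres to a power-map factorization must be effected by an honest Diophantine argument controlling heights of torsion points together with the action of $\mathrm{Gal}(\overline{k^c}/k^c)$ on the fibres. Making the correspondence \emph{reducible torsion fibre $\Longleftrightarrow$ power-map disconnection, up to finitely many exceptions} precise, and in particular extracting the subtorus relation from the evaluation sections, is the technical heart of the matter and the step I expect to be hardest.
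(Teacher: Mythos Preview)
The paper does not prove this theorem at all: it is quoted verbatim as Corollary~1(a) of Dvornicich--Zannier \cite{DZannier} and used purely as a black box in the proof of Proposition~\ref{specialize}. There is therefore no ``paper's own proof'' to compare your proposal against.

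That said, your sketch is broadly in the spirit of the argument in \cite{DZannier}. The geometric reformulation (irreducibility of $f(\zeta,Y)$ as transitivity of the $\mathrm{Gal}(\overline{k^c}/k^c)$-action on the fibre of $C\to\mathbb{G}_m$), the monodromy reading of the substitution $X\mapsto X^m$ as pullback by the power map $[m]$, and the reduction of the range $m\le n$ to the bound on the degree of a possible Kummer subcover are all correct and are exactly the preliminary reductions one wants. You are also right that the engine is a torsion-point finiteness statement of Lang/Laurent type for subvarieties of tori.

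Where your sketch is genuinely incomplete---and you flag this yourself---is the passage from ``infinitely many roots of unity $\zeta$ with $f(\zeta,Y)$ reducible'' to an actual torsion-coset structure forcing a Kummer subcover. The difficulty you identify is real: the coefficients of a putative factor $g(\zeta,Y)$ lie in $k^c$ but are not themselves roots of unity, so one does not directly obtain torsion points on a torus, and there is no Frobenius to organize the fibres. In \cite{DZannier} this step is handled by passing to an auxiliary cover (built from the Galois closure and an intransitive subgroup witnessing the reducibility) that acquires $k^c$-rational points over the offending $\zeta$'s, and then invoking their main theorem on cyclotomic points on curves; your outline gestures at this but does not supply it. So as written your proposal is a correct strategic outline rather than a proof, with the Diophantine core still to be filled in from \cite{DZannier}.
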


\noindent Thus Proposition \ref{specialize} follows immediately from Theorem \ref{DZannier} once we show that $S(X^m, Y)$ is irreducible over $\Q^c = \Q^{ab}$ for each positive integer $m \leq 24$. In fact we will prove that $S(X^m, Y)$ is irreducible over $\overline{\Q}$ for such $m$; that is, $S(X^m, Y)$ is absolutely irreducible. 

To accomplish this, we use \cite{BCG}. Before stating the result of \cite{BCG} that we need, we recall the definition of the Newton polygon of a $2$-variable polynomial.  To that end, let $k\subset \C$ be a field and
$f(X,Y) = \sum_{i,j} c_{i,j} X^iY^j \in k[X,Y]$. The Newton polygon of $P$ is the convex hull in $\R^2$ of all points $(i,j)$ such that $c_{i,j} \neq 0$. We call a point in the Newton polygon a {\em vertex} if it does not belong to the interior of any line segment in the Newton polygon. With this we have the following test for irreducibility.

\begin{theorem}\cite[Proposition 3] {BCG} \label{absoluteirredubility}
Let $k$ be a field and $f(X,Y) \in k[X,Y]$ be an irreducible polynomial. Let $\{(i_1, j_1), \dots, (i_l, j_l)\} \subset \Z^2$ be the vertex set of its Newton %%AR2: corrected a typo
polygon. If $\mathrm{gcd}(i_1, j_1, \dots,i_l, j_l) = 1$, then $f(X,Y)$ is irreducible over $\overline{k}$.
\end{theorem}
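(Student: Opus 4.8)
The plan is to prove the contrapositive: if $f \in k[X,Y]$ is irreducible over $k$ but \emph{factors nontrivially} over $\overline{k}$, then $\mathrm{gcd}(i_1,j_1,\dots,i_l,j_l) > 1$. Everything hinges on one classical fact about Newton polygons (due to Ostrowski), which I would isolate first as a lemma: for nonzero $g,h \in \overline{k}[X,Y]$ the Newton polygon is multiplicative, $N(gh) = N(g) + N(h)$, the Minkowski sum. The inclusion $N(gh) \subseteq N(g) + N(h)$ is immediate from $\mathrm{supp}(gh) \subseteq \mathrm{supp}(g) + \mathrm{supp}(h)$ together with monotonicity of the convex hull. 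For the reverse inclusion I would argue that no cancellation occurs at the extreme corners: a vertex $w$ of $N(g)+N(h)$ has a \emph{unique} expression $w = u + v$ with $u \in \mathrm{supp}(g)$ and $v \in \mathrm{supp}(h)$, namely the (vertex) points maximizing the linear functional in whose direction $w$ is extreme. Hence the coefficient of the monomial of $gh$ indexed by $w$ equals the single product of the nonzero corner coefficients of $g$ and $h$, so $w \in \mathrm{supp}(gh)$ and thus $w \in N(gh)$.

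With the lemma in hand, the key observation is that a factorization over $\overline{k}$ forces $N(f)$ to be an integer dilate of a smaller lattice polygon. Writing the factorization of $f$ over $\overline{k}$ as $f = c\prod_{i=1}^m h_i^{e_i}$ with the $h_i$ distinct and irreducible, the group $\Aut(\overline{k}/k)$ permutes the $h_i$ and preserves their multiplicities; since $f$ is irreducible over $k$, this action is transitive, for otherwise the product over a proper orbit would be $\Aut(\overline{k}/k)$-invariant and hence, by Galois descent, a nontrivial factor of $f$ over $k$. Consequently all the $h_i$ are conjugate and all the $e_i$ are equal. A field automorphism carries nonzero coefficients to nonzero coefficients, so conjugate factors have identical support and hence identical Newton polygons, say $N(h_1) = \cdots = N(h_m) =: N$. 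Setting $r = \sum_i e_i \geq 2$ (as $f$ is reducible over $\overline{k}$) and iterating the lemma gives $N(f) = \sum_i e_i N(h_i) = r \cdot N$.

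Finally I would read off the gcd. Because $N$ is the Newton polygon of a polynomial, its vertices lie in $\Z^2$, and the vertices of the dilate $r\cdot N$ are exactly $r$ times the vertices of $N$; hence every vertex $(i_k,j_k)$ of $N(f)$ has both coordinates divisible by $r \geq 2$. Therefore $r \mid \mathrm{gcd}(i_1,j_1,\dots,i_l,j_l)$, contradicting the hypothesis that this gcd equals $1$, which proves the contrapositive and hence the theorem. The main obstacle is the multiplicativity lemma $N(gh) = N(g)+N(h)$, and within it the no-cancellation argument at the vertices, since the remainder of the proof is a formal consequence of it together with transitivity of the Galois action. A secondary point needing care is the Galois-descent step guaranteeing a single orbit; this is transparent in characteristic zero (the case relevant here, where $k$ is a number field), and requires the usual separability bookkeeping in positive characteristic.
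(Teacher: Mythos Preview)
The paper does not prove this statement; it is quoted verbatim as Proposition~3 of \cite{BCG} and used as a black box in the verification that $S(X^m,Y)$ is absolutely irreducible. So there is no ``paper's proof'' to compare against, and your proposal supplies an argument where the paper offers none.

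Your argument is the standard one and is correct over perfect fields, in particular in characteristic zero (the only case the paper needs). The two ingredients --- Ostrowski's multiplicativity $N(gh)=N(g)+N(h)$, and the transitivity of the $\Aut(\overline{k}/k)$-action on the absolutely irreducible factors of an irreducible $f\in k[X,Y]$ --- are handled cleanly: your no-cancellation argument at the vertices of the Minkowski sum is right (the unique supporting linear functional forces uniqueness of the decomposition $w=u+v$ with $u,v$ vertices of $N(g),N(h)$), and once all factors are Galois conjugate they share a common Newton polygon $N$, giving $N(f)=r\cdot N$ with $r\ge 2$ and hence $r\mid\gcd(i_1,j_1,\dots,i_l,j_l)$.

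One caution: your closing remark that positive characteristic ``requires the usual separability bookkeeping'' understates the issue for imperfect $k$. The fixed field of $\Aut(\overline{k}/k)$ is the perfect closure $k^{1/p^\infty}$, not $k$ itself, so an orbit product is only guaranteed to lie in $k^{1/p^\infty}[X,Y]$; irreducibility over $k$ does not by itself force a single orbit. One way to repair this is to first pass to $k^{sep}$ (where Galois descent to $k$ is valid, giving a single orbit and a common Newton polygon for the $k^{sep}$-irreducible factors), and then handle the purely inseparable step $\overline k/k^{sep}$ separately (an irreducible $g\in k^{sep}[X,Y]$ that factors over $\overline{k}$ must do so as a $p$-th power up to constants, again dilating the Newton polygon by $p$). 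This is more than routine bookkeeping, but it does not affect the application in the paper, where $k=\Q$.
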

\noindent In our context,  two things need to be established for each $m \leq 24$:

\medskip
\noindent {\bf 1.}~{\em $S(X^m, Y)$ is irreducible over $\Q$;}

\medskip

\noindent {\bf 2.}~{\em the Newton polygon of $S(X^m, Y)$ satisfies the conditions of Theorem \ref{absoluteirredubility}.}\\[\baselineskip]
\noindent {\em Proof of} {\bf 1.} It can be checked quite quickly using Mathematica (for example), that $S(X^m, Y)$ is irreducible over $\Q$ for $m \leq 24$. However, we also 
supply explicit ideals $I_m$ of $\Z[X,Y]$ such that the reduction of $S(X^m, Y)$ modulo $I_m$ is an irreducible polynomial over a finite field. We include below a table of prime numbers $p$ such that $S(X^m, Y)$ is irreducible modulo $I_m = (X-2, p)$.
\begin{center}
\begin{tabular}{c|c c@{\hspace{0.5in}}c|c c@{\hspace{0.5in}} c|c}
    $m$ & $p$ & & $m$ & $p$ & & $m$ & $p$ \\
    \cline{1-2} \cline{4-5} \cline{7-8}
    1 &  17 & & 9 & 17 & & 17 & 11 \\
    2 &  11 & & 10 & 89 & & 18 & 11 \\
    3 & 11 & & 11 & 17 & & 19 & 17 \\
    4 & 31 & & 12 & 11 & & 20 & 53 \\
    5 & 17 & & 13 & 11 & & 21 & 17 \\
    6 & 31 & & 14 & 31 & & 22 & 11 \\
    7 & 11 & & 15 & 17 & & 23 & 11 \\
    8 & 11 & & 16 & 31 & & 24 & 31
\end{tabular}
\end{center}

\noindent {\em Proof of} {\bf 2.} Let us first observe that the effect of replacing $X$ with $X^m$ is to {\em stretch} the Newton polygon of $S(X,Y)$ in the positive $X$-direction.  More precisely, if
$(i,j)$ is a point in the Newton polygon (not necessarily on the boundary) of $S(X,Y)$, then $(mi,j)$ is a point in the Newton polygon of $S(X^m,Y)$. 

From \S \ref{additional}, we observe that $S(X, Y)$ has a $Y$ monomial term with coefficient $1$. Moreover, further inspection of $S(X,Y)$ shows that $1$ is the only power $k$ such that $Y^k$ has nonzero coefficient in $S(X, Y)$. Hence $(0,1)$ is a vertex of the Newton polygon; see Figure \ref{newtonpolygon}.
\begin{figure}[ht]
    \centering
    \includegraphics[scale=0.4]{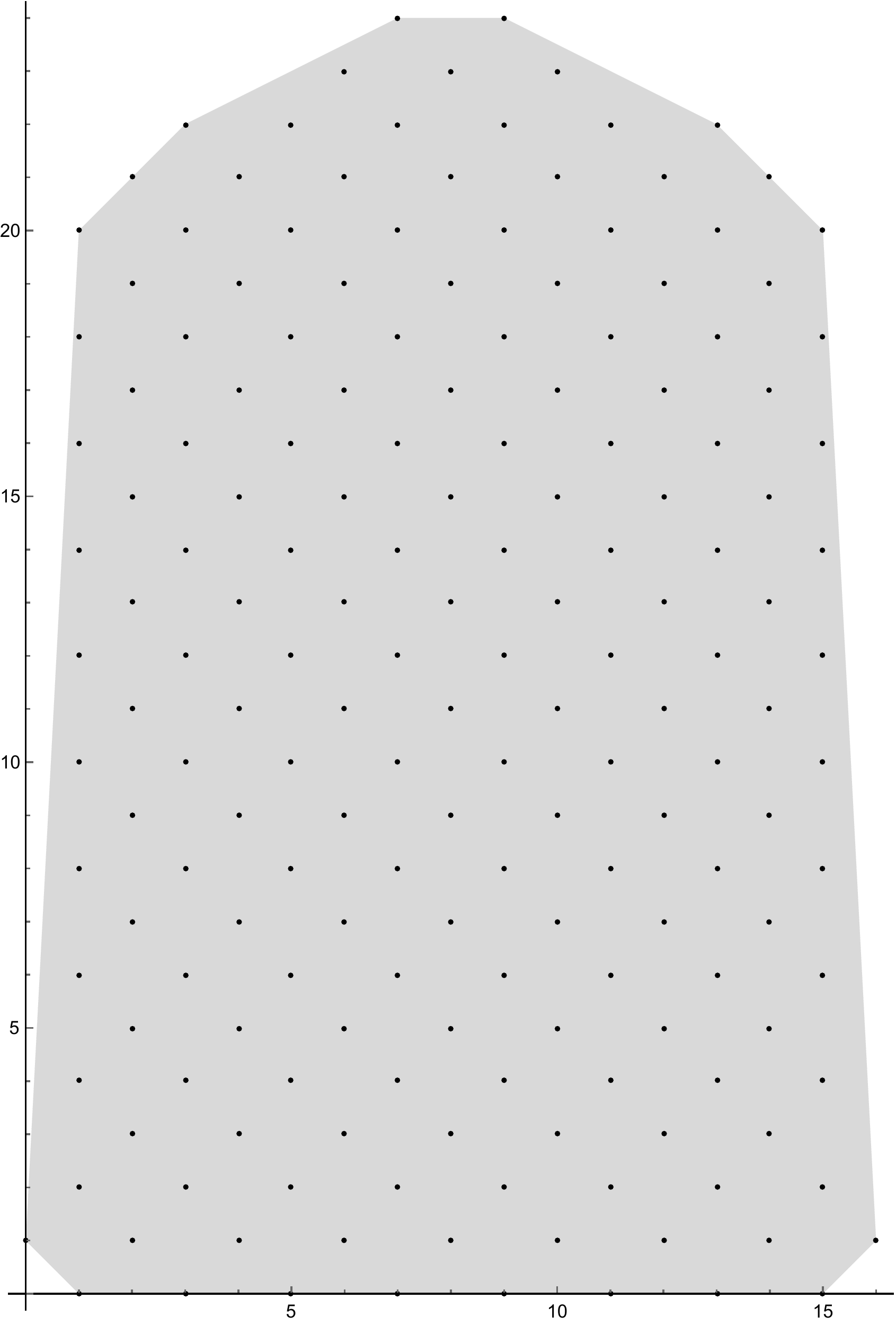}
    \caption{The Newton polygon of $S(X,Y)$ is the convex hull of the black points corresponding to nonzero coefficients of $S(X,Y)$.}
    \label{newtonpolygon}
\end{figure}

In fact $(0,1)$ will be a vertex of the Newton polygon of $S(X^m, Y)$ for all $m \geq 1$. To see this note that after replacing $X$ with $X^m$, the monomial term in $Y$ still has a coefficient $1$ and will remain
the only power of $Y$ that has a nonzero coefficient; i.e.  $(0,1)$ will continue to be a vertex of the Newton polygon of $S(X^m, Y)$ for all $m \geq 1$.  
Thus $S(X^m,Y)$ satisfies the hypotheses of Theorem \ref{absoluteirredubility} whenever $S(X^m, Y)$ is irreducible over $\Q$. \\[\baselineskip]
With these two statements in hand,  we may then conclude that $S(X^m, Y)$ is absolutely irreducible for $m \leq 24$ and hence, by Theorem \ref{DZannier} that $S(\zeta, Y)$ is irreducible over $\Q(\zeta)$ for all but finitely many roots of unity $\zeta$. This, together with the discussion at the start of this section completes the proof of Proposition \ref{specialize}.\qed

\section{Remarks on non-integral trace}
\label{remsnonint}

In this section we gather together some comments about non-integral trace, how it persists in certain Dehn fillings and disappears in others.  
In particular, the example of the link  given in \S \ref{L11n71} stands in contrast to the link $L$ we use in the proof of Theorem \ref{main}, in that, as described in \S \ref{L11n71}, non-integrality does not persist in $(d,0)$-Dehn filling in this case. This clearly needs to be better understood.

\subsection{Some remarks on \texorpdfstring{$S^3\setminus L$}{S\unichar{"00B3} \unichar{"005C} L}} 
\label{frems}

One closed embedded essential surface in the complement of the link $L$ can be constructed from the essential tangle decomposition shown in Figure \ref{4PuncSphere}. The $4$-punctured sphere $S$ shown in Figure \ref{4PuncSphere} is incompressible, and tubing $S$ provides a closed embedded essential surface $F$. \par

\begin{figure}[htbp]
    \centering
    \includegraphics[scale=0.5]{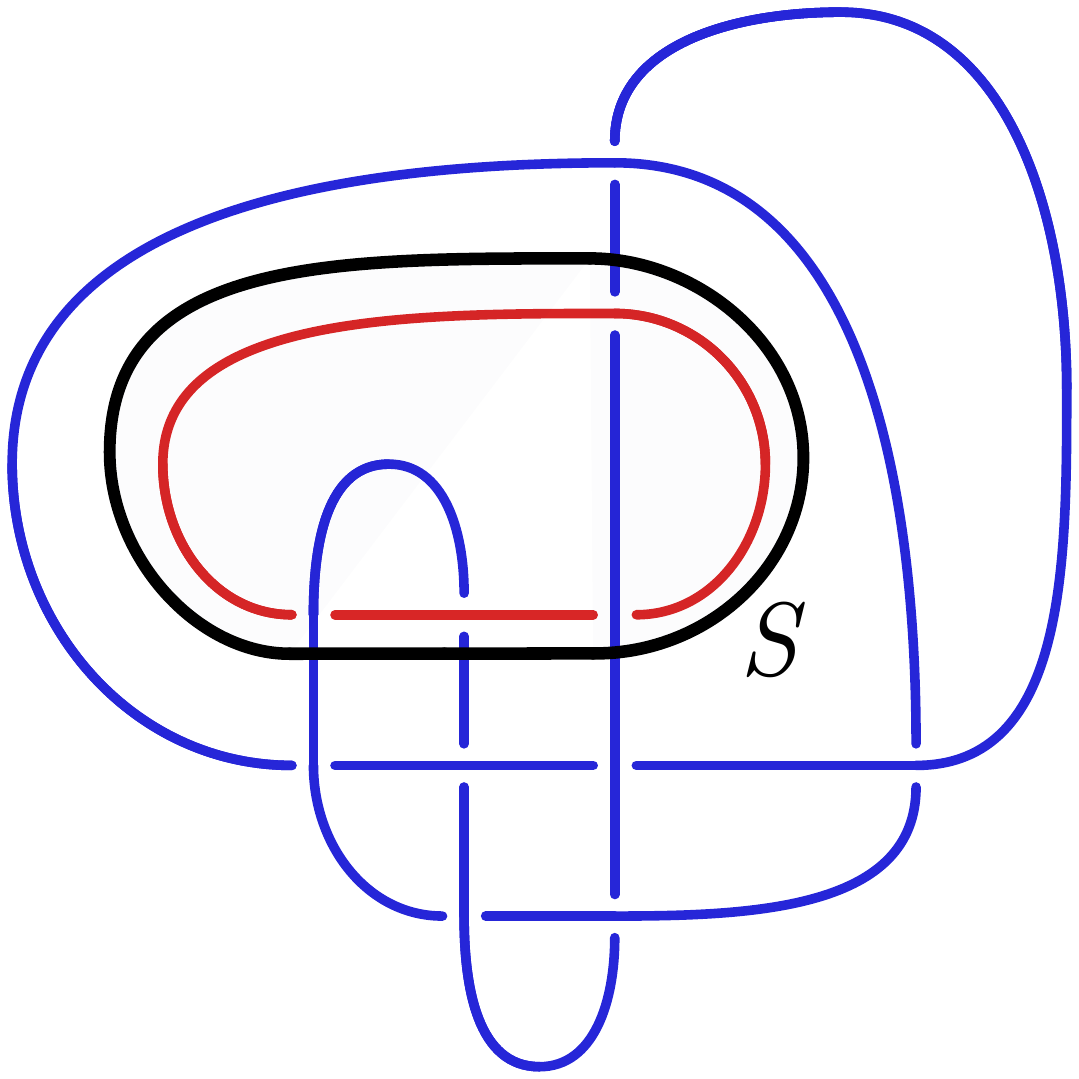}
    \caption{The surface $S$ shown in black is an incompressible $4$-punctured sphere.}
    \label{4PuncSphere}
\end{figure}

Note that $(1,n)$-Dehn filling on $J$ compresses the surface $F$ described above, since the result of $(1,n)$-Dehn filling on $J$ produces a rational tangle on the filled side of $S$.
Although we cannot prove compressibility of all closed embedded essential surfaces in the complement of $L$ upon the result of $(1,n)$-Dehn filling on $J$, we expect this to be the case, and provide some evidence for this below.

The knot $7_6$ is a $2$-bridge knot, and hence its complement  does not contain a closed embedded essential surface (see for example \cite{GL}). Using SnapPy,
we identified that $(-1,1)$, $(1,1)$, $(-1,2)$ and $(1,2)$ Dehn fillings on the component $J$ produce manifolds homeomorphic to the complements of $9_{43}$, $10_{129}$, $K11n57$ and $K12n238$ respectively, all of which are again manifolds
that do not contain a closed embedded essential surface (as can be checked using \cite{BCT} or KnotInfo \cite{knot}). Since the Dehn fillings described above do not contain a closed embedded essential surface, any closed embedded essential surface contained in $S^3\setminus L$ must compress in these Dehn fillings. It follows from
\cite{CGLS} and \cite{Wu} that any closed embedded essential surface in $S^3\setminus L$ must contain an essential simple closed curve that is isotopic to the longitude of $J$. 

From the above discussion, the knots $7_6$, $9_{43}$, $10_{129}$, $K11n57$ and $K12n238$ all have integral trace. 
%Since these five knot complements do not contain a closed embedded essential surface, the Dehn fillings $(1,0)$, $(-1,1)$, $(1,1)$, $(-1,2)$ and $(1,2)$ also cannot be associated to boundary slopes of surfaces properly embedded in the exterior $E(L)\subset S^3$ that meet only the boundary torus corresponding to the component $J$.  The meridian $m_0$ of $J$ is a core curve for the $(-1,1)$, $(1,1)$ Dehn fillings, and so it follows from a generalization of a result of Cooper and Long \cite[Theorem 2.6]{CL} that at these characters, $t$ (as above) is an algebraic integer. The knots $9_{43}$ and $10_{129}$ have integral trace,
In particular, $t=\chi_\rho(m_0)$ as in \S \ref{character}, is an algebraic integer, as is the
the solution for $Y$ obtained from $R(t,Y)=0$ in these cases. Hence, at these values of $t$, the polynomial $R(t,Y)$ must be reducible.
We expect this to be the case more generally for $(1,n)$ Dehn filling on $J$.

\subsection{Another link}
\label{L11n71}
Another $2$ component link with an unknotted component, with linking number $2$ between the two components and 
has non-integral trace is the link {\tt{L11n71}} from Thistlethwaite's table \cite{TT}.  Executing the same plan
as we described above leads to analogous polynomial $R_1(t,Y)$ shown below:

\medskip

\begin{align*}
R_1(t,Y) &= 32768 t^5+t^3 Y^{18}-393216 t^3+\big(3 t^5-72 t^3+210 t\big) Y^{16}+\big(-22 t^5+792t^3-2734 t\big) Y^{14} \\
&\quad +\big(104 t^5-5928 t^3+25240 t\big) Y^{12} +\big(-320t^5+30240 t^3-151360 t\big) Y^{10}+\big(256 t^5 \\
&\quad-105728 t^3+660224 t\big) Y^8 +\big(3072 t^5+195584 t^3-1746944 t\big) Y^6+\big(-4096 t^5-192512 t^3 \\
&\quad +3059712t\big) Y^4+\big(-16384 t^5+245760 t^3-3776512 t\big) Y^2+\big(3 t^4-25t^2\big) Y^{17}+\big(456 t^4 \\
&\quad -5968 t^2+4888\big) Y^{13}+\big(-2448 t^4+41104t^2-40752\big) Y^{11}+\big(9024 t^4-201664 t^2 \\
&\quad +223936\big) Y^9+\big(-15872t^4+649216 t^2-881408\big) Y^7+\big(-10240 t^4-1182720 t^2 \\
&\quad+1587200\big) Y^5+\big(73728 t^4+946176 t^2-479232\big) Y^3+\big(-65536 t^4-163840t^2 \\
&\quad-1638400\big) Y+\big(t^6-69 t^4+650 t^2-593\big) Y^{15}+3276800 t.
\end{align*}

\medskip

\noindent with leading term $t^3 Y^{18}$. When $t=2$ (i.e. at the faithful discrete representation) this factors as
$$\left(Y^3+2 Y^2-4 Y-16\right)^2 \left(Y^4-2 Y^3-4 Y^2+8 Y+16\right)^2 \left(8 Y^4-52
   Y^3+132 Y^2-153 Y+68\right)$$
\noindent with the term  $\left(8 Y^4-52Y^3+132 Y^2-153 Y+68\right)$ corresponding to the faithful discrete representation.  

As noted in \S \ref{irred}, for $d$ odd, $2\cos2\pi/d$ is always a unit. Thus specializing the polynomial $R_1(t,Y)$ at such $t=2\cos2\pi/d$ shows that $Y$ is an algebraic integer for all odd $d\geq 2$. 

The knotted component of {\tt{L11n71}} is the knot $7_4$ which is a $2$-bridge knot.  Repeating  the analysis that we did on $L$, we identified that $(-1,1)$, $(1,1)$, $(-1,2)$ and $(1,2)$ Dehn fillings on the 
unknotted component produce manifolds homeomorphic to the complements of $7_{3}$, $10_{130}$, $10_{128}$ and $K12n723$ respectively, which are again all manifolds
that do not contain a closed embedded essential surface (as can be checked using \cite{BCT} or KnotInfo \cite{knot}). Hence these knots have integral trace. Moreover, as with $L$, 
any closed embedded essential surface contained in the complement of {\tt{L11n71}} must compress in these fillings, and as before it follows from
\cite{CGLS} and \cite{Wu}  that any closed embedded essential surface in the complement of {\tt{L11n71}} must contain an essential simple closed curve that is isotopic to the longitude of the unknotted component. 

As with $L$, from the link diagram shown in Thistlethwaite's table \cite{TT}, one sees an essential tangle decomposition of {\tt{L11n71}}, which can be tubed to construct a closed embedded essential surface in the complement of the link {\tt{L11n71}}.

\subsection{The manifold \texorpdfstring{$m137$}{m137}}
\label{m137}
The manifold $m137$ (denoted $M$ in what follows) of the SnapPy census is a knot complement in $S^2\times S^1$, and has been of some interest (see \cite{Dun} and \cite{Gao}).  Moreover, it is the "smallest"
cusped hyperbolic 3-manifold we know of that has non-integral trace.  From SnapPy, a presentation of $\pi_1(M)$ is {\tt{<a,b |aaabbABBBAbb=1>}}, with the faithful discrete representation being given by:
$$a\mapsto \left(
\begin{array}{cc}
 -\frac{3}{2}+\frac{i}{2} & 1 \\
 -1 & 0 \\
\end{array}
\right)~\hbox{and}~b\mapsto \left(
\begin{array}{cc}
 0 & 1 \\
 -1 & -\frac{1}{2}-\frac{i}{2} \\
\end{array}
\right).$$
A peripheral system for $M$ is given by $\{a^{-1}b^2a^4b^2,(ba)^{-1}\}$. Note that $(0,1)$ Dehn filling gives $S^2\times S^1$.  Following \cite{Gao}, set $\lambda=(ba)^{-1}$, then $\pi_1(M)$ can be generated by $\{b,\lambda\}$ and using this, a description for the canonical
component of $M$ is given in \cite{Gao} as the curve in $\C^2$ obtained as the vanishing set of the polynomial: 
$$P(s,t)=(-2-3s+s^3)t^4 + (4+4s-s^2-s^3)t^2-1,$$
where $s=\chi_\rho(\lambda)$, $t=\chi_\rho(b)$ and $\chi_\rho(b\lambda)=t-\frac{1}{t(s+1)}$. 
Note that $(-2-3s+s^3)=(s+1)^2(s-2)$ and $(4+4s-s^2-s^3)=(s+1)(s+2)(s-2)$. Thus, understanding the behavior of $t=\chi_\rho(b)$ (i.e. integral versus non-integral) is reduced to 
understanding when $(s+1)$ and $(s-2)$ are units in the number fields arising from Dehn filling representations.

For example, if we consider $(0,d)$ Dehn fillings with $d$ odd, we are led to consideration of when $(2\cos(2\pi/d)+1)$ and $(2\cos(2\pi/d)-2)$ are and are not units.  For $d$ even similar statements hold
for $(2\cos(2\pi/2d)+1)$ and $(2\cos(2\pi/2d)-2)$. For ease of exposition we will assume that $d$ is odd. 

Now  $(2\cos(2\pi/d)-2)$ is never a unit for $d$ a power of a prime (resp. is a unit when $d$ is not a power of a prime). 
To see this note that: $(2\cos(2\pi/d)-2) = \zeta_d+1/\zeta_d-2 = (\zeta_d-1)^2/\zeta_d$, where $\zeta_d$ is a primitive $d$th root of unity. 
As above, let $\Phi_d(x)$ denote the $d$-th cyclotomic polynomial, then $\zeta_d-1$ is a unit if and only if $\Phi_d(1)=\pm 1$ (see for example \cite[Lemma 2.5]{Len}).
It is a well-known property of cyclotomic polynomials that this happens if and only $d$ is not a power of a prime.  
Similarly, when $(2\cos(2\pi/d)+1)$ is a unit reduces to understanding when $\zeta_d^2+\zeta_d+1$ is a unit, which by \cite[Lemma 2.5]{Len} holds if and only if $\Phi_d(\omega)$ is a unit
where $\omega$ is a primitive cube root of unity.  

We have not analyzed all of this carefully, but experiments seem to support that $(0,d)$ Dehn fillings have integral trace (so modulo irreducibility concerns
both the above terms are units) when $d=10k$, $k\geq 1$. We also found that $(0,14)$ has integral trace. 

Experiments also suggest that many other Dehn fillings have integral traces; for example it seems that for $n$ an integer, the family of $(1,n)$ Dehn fillings have integral trace. In particular, we checked this holds for integers $n\in[-7,-3]\cup[2,6]$ and so
at such Dehn fillings $s$ and $t$ will be algebraic integers.
Hence in these cases, from the expression for $P(s,t)$ (modulo irreducibility concerns), we deduce that $(s+1)$ and $(s-2)$ must be units in the number fields constructed by these Dehn fillings.

We also note that using \cite{CGLS} and \cite{Wu} any closed embedded essential surface in $M$ must contain an essential simple closed curve that is isotopic to $\lambda$.  To see this, as noted above, $(0,1)$ Dehn filling produces $S^2\times S^1$, any closed embedded essential surface in $M$ must compress in this filling. Moreover, 
SnapPy shows that $(-1,3)$, $(-1,4)$, $(-1,5)$, $(1,2)$, $(1,3)$ and $(1,5)$ are all hyperbolic, all have volume $<3$ and have a shortest closed geodesic
of length $>0.3$. Hence using the list of small volume Haken manifolds from \cite{Dun2} all of these manifolds are non-Haken hyperbolic 3-manifolds.
Hence any closed embedded essential surface in $M$ must compress in these fillings. 
In addition, $(-1,1)$ Dehn filling results in a small Seifert fibered space, and so again, any closed embedded essential surface in $M$ must compress in this filling. 

From \cite{CGLS} and \cite{Wu}, the only way that all of these compressions can happen is that any closed embedded essential surface in $M$ must contain an essential simple closed curve that is 
isotopic to $\lambda$.

\section{Additional Mathematica output}
\label{additional}
\begin{align*}
Q(t,X,Y) &=-t+X^{16} Y^3+16 X^{15} Y^4-6 X^{15} Y^2+120 X^{14} Y^5-83 X^{14} Y^3+12 X^{14} Y +560 X^{13}Y^6\\
&\quad  -532 X^{13} Y^4 + 132 X^{13} Y^2-8 X^{13}+1820 X^{12} Y^7-2093 X^{12} Y^5+644X^{12}Y^3-44 X^{12} Y \\
&\quad+4368 X^{11} Y^8-5642 X^{11} Y^6+1800 X^{11} Y^4 +10 X^{11} Y^2 -32 X^{11}+8008 X^{10} Y^9\\
&\quad -11011 X^{10} Y^7+3036 X^{10}Y^5+755X^{10}Y^3-236 X^{10} Y+11440 X^9 Y^{10}-16016 X^9 Y^8 \\
&\quad+2684 X^9 Y^6+3040 X^9 Y^4-700 X^9 Y^2-48X^9+12870 X^8 Y^{11}-17589 X^8 Y^9 \\
&\quad-396 X^8 Y^7+6519 X^8 Y^5 -939 X^8 Y^3-328 X^8Y+11440 X^7 Y^{12}-14586 X^7 Y^{10} \\
&\quad-4752 X^7 Y^8+8892 X^7 Y^6-72 X^7Y^4  -950 X^7
   Y^2-32 X^7+8008 X^6 Y^{13}-9009 X^6 Y^{11} \\
&\quad -7260 X^6 Y^9+8070 X^6 Y^7+1764 X^6 Y^5 -1493 X^6 Y^3-182 X^6 Y+4368 X^5 Y^{14} \\
&\quad -4004 X^5 Y^{12}-6468 X^5 Y^{10}+4800 X^5 Y^8+3024 X^5 Y^6 -1328 X^5 Y^4 -428 X^5 Y^2\\
&\quad-12 X^5+1820 X^4 Y^{15}-1183 X^4 Y^{13}-3828 X^4Y^{11}+1690 X^4Y^9  +2670 X^4 Y^7
    \\
&\quad-583 X^4 Y^5-532 X^4 Y^3-51 X^4 Y +560 X^3 Y^{16}-182 X^3 Y^{14}-1528 X^3 Y^{12}   \\
&\quad+202 X^3 Y^{10}+1416 X^3 Y^8+ 2X^3 Y^6-368 X^3 Y^4-82 X^3Y^2 -6 X^3+120 X^2 Y^{17}  \\
&\quad+7 X^2 Y^{15}-396 X^2 Y^{13}-89 X^2 Y^{11}+448 X^2 Y^9+125 X^2Y^7-134 X^2 Y^5-61 X^2 Y^3\\
&\quad-15 X^2 Y+16 X Y^{18}+8 X Y^{16}-60 X Y^{14}-40 X Y^{12}+76 X Y^{10}+52 X Y^8-20 X Y^6 \\
&\quad-20 X Y^4-12 X Y^2 +Y^{19}+Y^{17}-4 Y^{15}-5 Y^{13}+5Y^{11}+7 Y^9-2 Y^5-3 Y^3+Y.
\end{align*}
\begin{align*}
R(t,Y)&=669124 t-2 t^7-498002 t^5-5223073 t^3-16 t Y^{24}+\left(120 t^2+176\right) Y^{23}+\big(-t^5-344 t^3 \\
&\quad-1595t\big) Y^{22}+\left(-t^7-265 t^5-8323 t^3-5017 t\right) Y^{20}+\big(31 t^7-820 t^5+45501 t^3\\ 
&\quad+26034 t\big) Y^{18}+ \left(-428 t^7+34065 t^5-60100 t^3-223825 t\right) Y^{16}+\big(3393 t^7-229701 t^5\\
&\quad-1671221 t^3-1389221 t\big) Y^{14}+\left(-16709 t^7+392665 t^5+4196073 t^3+3978713 t\right) Y^{12}\\
&\quad+\left(51769 t^7+613384 t^5+1570051 t^3+257774 t\right) Y^{10}+\big(-97592 t^7-3180386 t^5\\
&\quad-27592720 t^3-28733690 t\big) Y^8+\left(102474 t^7+3256419 t^5+42551766 t^3+53431661 t\right) Y^6\\
&\quad+\left(-49677 t^7+1658479 t^5-6346815 t^3-21240713 t\right) Y^4+\big(6945 t^7-5819870 t^5\\
&\quad-50037327 t^3-50675755 t\big) Y^2+ \left(2 t^6+466 t^4+5400 t^2+1265\right) Y^{21}+\big(8 t^6+5340 t^4\\
&\quad-4891 t^2-551\big) Y^{19}+\left(246 t^6-65918 t^4-71499 t^2+10156\right)Y^{17}+ \big(-8510 t^6+292550 t^4\\
&\quad+1114568 t^2+263159\big) Y^{15} +\left(62972 t^6+480016 t^4+532043 t^2-387\right) Y^{13} +\big(-184968 t^6\\
&\quad-4075296 t^4-11015955 t^2-1827985\big) Y^{11}+\big(148666 t^6+7363350 t^4+27163139 t^2 \\
&\quad +4743016\big) Y^9+\left(389244 t^6+2024132 t^4-5822600 t^2+2654010\right) Y^7+\big(-959338 t^6\\
&\quad-19599946 t^4-57150066 t^2-22718115\big) Y^5
   +\big(659693 t^6+19149660
   t^4+77616992 t^2\\
&\quad+31164769\big) Y^3+\left(t^8+235110 t^6+11747029 t^4+26741431 t^2-669124\right) Y
\end{align*}
\medskip

\noindent As a check, Mathematica shows that $R(-2,(17+3\sqrt{-7})/8)=0$ (i.e. at the faithful discrete representation).

\medskip

\begin{align*}
S(X,Y) &= \left(-16 X^9-16 X^7\right) Y^{24}+\left(120 X^{10}+416 X^8+120 X^6\right) Y^{23} \\
&\quad +\left(-X^{13}-349 X^{11}-2637 X^9-2637 X^7-349 X^5-X^3\right) Y^{22} \\
&\quad +\left(2 X^{14}+478 X^{12}+7294 X^{10}+14901 X^8+7294 X^6+478 X^4+2 X^2\right)Y^{21}\\
&\quad +\left(-X^{15}-272 X^{13}-9669 X^{11}-32671 X^9-32671 X^7-9669 X^5-272 X^3-X\right) Y^{20} \\
&\quad +\left(8 X^{14}+5388 X^{12}+16589 X^{10}+21867 X^8+16589 X^6+5388 X^4+8 X^2\right) Y^{19} \\
&\quad+\big(31 X^{15}-603 X^{13}+42052 X^{11}+155422 X^9+155422 X^7+42052 X^5 \\
    &\qquad -603 X^3+31 X\big) Y^{18}\\
&\quad+\left(246 X^{14}-64442 X^{12}-331481 X^{10}-523430 X^8-331481 X^6-64442 X^4+246 X^2\right) Y^{17}\\
&\quad+\big(-428 X^{15}+31069 X^{13}+101237 X^{11}-78455 X^9-78455 X^7+101237 X^5\\
    &\qquad +31069 X^3-428 X\big) Y^{16} \\
&\quad+\big(-8510 X^{14}+241490 X^{12}+2157118 X^{10}+4077395 X^8+2157118 X^6 \\
    &\qquad +241490 X^4-8510 X^2\big) Y^{15}\\
&\quad+\big(3393 X^{15}-205950 X^{13}-2748473 X^{11}-8581139 X^9-8581139 X^7-2748473 X^5\\
    &\qquad-205950 X^3+3393 X\big) Y^{14} \\
&\quad+\big(62972 X^{14}+857848 X^{12}+3396687 X^{10}+5203235 X^8+3396687 X^6+857848 X^4\\
    &\qquad+62972 X^2\big) Y^{13} \\
&\quad+\big(-16709 X^{15}+275702 X^{13}+5808509 X^{11}+19908767 X^9+19908767 X^7 \\
    &\qquad+5808509 X^5+275702 X^3-16709 X\big) Y^{12}\\
&\quad+\big(-184968 X^{14}-5185104 X^{12}-30091659 X^{10}-52011031 X^8-30091659 X^6 \\
    &\qquad-5185104 X^4-184968 X^2\big) Y^{11} \\
&\quad+\big(51769 X^{15}+975767 X^{13}+5724120 X^{11}+12913682 X^9 +12913682 X^7\\
    &\qquad+5724120 X^5+975767 X^3+51769 X\big) Y^{10} \\
&\quad+\big(148666 X^{14}+8255346 X^{12}+58846529 X^{10}+106222714 X^8+58846529 X^6\\
    &\qquad+8255346 X^4+148666 X^2\big) Y^9\\
&\quad+\big(-97592 X^{15}-3863530 X^{13}-45544082 X^{11}-146731430 X^9\\
&\qquad-146731430 X^7-45544082 X^5-3863530 X^3-97592 X\big) Y^8\\
&\quad+\big(389244 X^{14}+4359596 X^{12}+8112588 X^{10}+10938482 X^8+8112588 X^6+4359596 X^4\\
    &\qquad+389244 X^2\big) Y^7\\
&\quad+\big(102474 X^{15}+3973737 X^{13}+60985815 X^{11}+217237739 X^9+217237739 X^7\\
    &\qquad+60985815 X^5+3973737 X^3+102474 X\big) Y^6\\
&\quad+\big(-959338 X^{14}-25355974 X^{12}-149939920 X^{10}-273804683 X^8-149939920 X^6\\
    &\qquad-25355974 X^4-959338 X^2\big) Y^5\\
&\quad+\big(-49677 X^{15}+1310740 X^{13}+902363 X^{11}-25435063 X^9-25435063 X^7+902363 X^5\\
    &\qquad+1310740 X^3-49677 X\big) Y^4\\
&\quad+\big(659693 X^{14}+23107818 X^{12}+164111027 X^{10}+314490573 X^8+164111027 X^6\\
    &\qquad+23107818 X^4+659693 X^2\big) Y^3\\
&\quad+\big(6945 X^{15}-5771255 X^{13}-78990832 X^{11}-258743361 X^9-258743361 X^7\\
&\qquad-78990832 X^5-5771255 X^3+6945 X\big) Y^2\\
&\quad+\big(X^{16}+235118 X^{14}+13157717 X^{12}+77256253 X^{10}+127998182 X^8+77256253 X^6\\
&\qquad+13157717 X^4+235118 X^2+1\big) Y\\
&\quad-2 X^{15}-498016 X^{13}-7713125 X^{11}-19980185 X^9-19980185 X^7-7713125 X^5\\
&\qquad-498016 X^3-2 X
\end{align*}

\section{Knots through \texorpdfstring{$12$}{12} crossings with non-integral trace}
\label{knot12}

In this section we list those knots through $12$ crossings that we are able to confirm have non-integral trace.  As noted previously, a knot $K$ with non-integral trace contains a closed embedded essential surface in its complement.  In \cite{BCT},
they show that of the $2977$ knots in the census of non-trivial prime knots with $\leq12$ crossings, $1019$ of these knots contain a closed embedded essential surface in their complement, and it is this list of $1019$ that is our starting point.

We were able to determine whether or not traces were integral or not for $450$ of them, and of those $450$ knots, we determined that $170$ of them have non-integral trace. 
The tables were compiled using recent additions to SnapPy that, in principle, allow one to compute exactly elements of $\Gamma$ whose traces generate the trace-field $\Q(\tr\Gamma)$. Indeed, as is well-known (see \cite[Chapter 3.5]{MR} for example), the trace of every element in $\Gamma$ is an integer polynomial in the traces of any finite generating set of $\Gamma$ together with a finite number of products of the generators, and so these traces suffice to certify non-integral trace in the sense described below. 

We capped the number of digits that the algebraic numbers were computed to as well as their degree (at $50$) to allow for reasonable runtime. 
For those knots that we were unable to decide integral or non-integral,  one needs additional precision or to raise the degree. 

In the tables that follow we list the $170$ knots with non-integral trace, together with rational primes $p$ that {\em certify} non-integrality.  By this we mean that if $S^3\setminus K=\mathbb{H}^3/\Gamma$ then there exists $\alpha \in\Gamma$ with 
$\tr(\alpha)=r/s\in \mathbb{Q}(\tr\Gamma)$ and a prime ideal $\mathcal{P} \subset \mathbb{Q}(\tr\Gamma)$ with $\mathcal{P}|<s>$ of norm $p^a$ for some integer $a>0$. 
In this notation, for the knots constructed in the proof of Theorem \ref{main}, non-integrality was certified by $p=2$.\\[\baselineskip]
\noindent{\bf $129$ knots with $p=2$:}

\begin{center}
    \begin{tabular}{c c c c c c c c c}
9\textsubscript{29} & 9\textsubscript{38} & 10\textsubscript{96} & 10\textsubscript{97} & 10\textsubscript{99} & 11a38 & 11a102 & 11a123 & 11a124\\
11a126 & 11a173 & 11a232 & 11a244 & 11a291 & 11a292 & 11a293 & 11a294 & 11a346\\
11a347 & 11a353 & 11a354 & 11n65 & 11n66 & 11n68 & 11n69 & 11n97 & 11n99\\
11n156 & 12a66 & 12a74 & 12a100 & 12a150 & 12a156 & 12a163 & 12a199 & 12a207\\
12a231 & 12a244 & 12a245 & 12a260 & 12a311 & 12a331 & 12a396 & 12a414 & 12a435\\
12a491 & 12a493 & 12a494 & 12a634 & 12a647 & 12a702 & 12a706 & 12a708 & 12a771\\
12a798 & 12a818 & 12a845 & 12a847 & 12a853 & 12a862 & 12a873 & 12a886 & 12a939\\
12a940 & 12a1059 & 12a1062 & 12a1097 & 12a1124 & 12a1156 & 12a1173 & 12a1261 & 12a1266\\
12a1270 & 12a1288 & 12n49 & 12n50 & 12n51 & 12n52 & 12n53 & 12n100 & 12n101\\
12n102 & 12n140 & 12n141 & 12n156 & 12n158 & 12n175 & 12n176 & 12n201 & 12n202\\
12n203 & 12n204 & 12n211 & 12n245 & 12n246 & 12n247 & 12n253 & 12n254 & 12n257\\
12n258 & 12n259 & 12n265 & 12n266 & 12n267 & 12n268 & 12n269 & 12n270 & 12n329\\
12n330 & 12n331 & 12n364 & 12n365 & 12n423 & 12n484 & 12n494 & 12n495 & 12n496\\
12n518 & 12n600 & 12n601 & 12n602 & 12n605 & 12n665 & 12n672 & 12n690 & 12n694\\
12n695 & 12n697 & 12n888 & & & & & & 
    \end{tabular}
\end{center}

\noindent{\bf $24$ knots with $p=3$:}

\begin{center}
    \begin{tabular}{c c c c c c c c c}
10\textsubscript{90} & 10\textsubscript{93} & 10\textsubscript{122} & 11a288 & 12a389 & 12a430 & 12a868 & 12a1043 & 12a1105\\
12a1109 & 12a1246 & 12n193 & 12n194 & 12n195 & 12n196 & 12n215 & 12n216 & 12n217\\
12n454 & 12n456 & 12n689 & 12n840 & 12n879 & 12n886 &  &
    \end{tabular}
\end{center}

%\vfill\eject

\noindent{\bf $17$ remaining cases:}

\begin{center}
\begin{tabular}{c|c c@{\hspace{0.5in}}c|c c@{\hspace{0.5in}} c|c}
    knot & primes & & knot & primes & & knot & primes \\
    \cline{1-2} \cline{4-5} \cline{7-8}
    10\textsubscript{98} &  2,3 & & 12a567 & 23 & & 12n264 & 7 \\
    11a132 &  2,3 & & 12a701 & 2,5 & & 12n440 & 2,3 \\
    11a323 & 5 & & 12a1117 & 13 & & 12n508 & 2,3 \\
    12a348 & 2,3 & & 12a1203 & 7 & & 12n604 & 2,3 \\
    12a466 & 7 & & 12a1205 & 17 & & 12n868 & 5 \\
    12a483 & 7 & & 12n256 & 7 & &  &  \\
\end{tabular}
\end{center}

\section{Questions and comments}
\label{questions}

We gather together some questions raised by this work, as well as some comments.\\[\baselineskip]
\noindent{\bf Existence of accidental parabolic elements:}~In the two examples of link complements considered in this paper, as well as the example of $m137$, the closed embedded 
essential surfaces in the these manifolds carried essential curves that were isotopic to essential simple curves on a boundary torus, these are examples of {\em accidental parabolic elements} in the surface group.
As we now describe, this also holds for all of the knots listed in \S \ref{knot12}.

For the alternating knots listed in \S \ref{knot12}, this follows from \cite{Men}, for which the accidental parabolic is a meridian. As
was pointed out to us by J. Howie, all the non-alternating knots listed in \S \ref{knot12} (apart from {\tt{12n253}} and {\tt{12n254}}) are almost alternating, and so by \cite{Aetal} also have
complements for which the meridian is an accidental parabolic on any closed embedded essential surface. Furthermore, Howie observed that the two remaining knots admit an essential tangle decomposition as shown below in
Figure \ref{tangleDecomp}. Tubing the essential $4$-punctured spheres $S$ and $S'$ shown in Figure \ref{tangleDecomp} provides a closed embedded essential surface that carries an accidental parabolic which is a meridian.

\begin{figure}[htpb]
    \centering
    \includegraphics[scale=0.5]{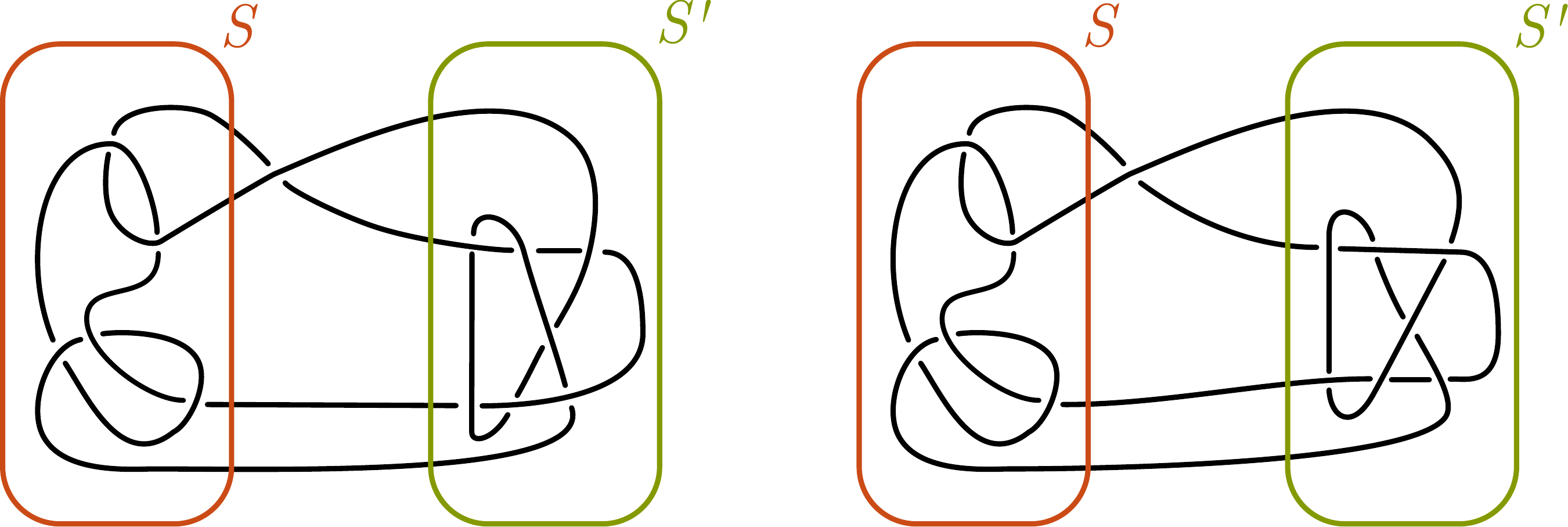}
    \caption{The knot \texttt{12n253} on the left and \texttt{12n254} on the right. The orange and green essential $4$-punctured spheres, $S$ and $S'$, provide an essential tangle decomposition for each knot.}
    \label{tangleDecomp}
\end{figure}

Howie also pointed out to us that the knots constructed in the proof of Theorem \ref{main} have complements that admit a closed embedded essential surface that carries an accidental parabolic element (again a meridian).  We include his argument below.

\begin{lemma}[Howie]
\label{howie}
Let $K_d$ denote the knot constructed in the proof of Theorem \ref{main} via the $d$-fold cyclic branched cover of $S^3$ branched over $J$, and where $d$ is assumed to be odd. Then $S^3\setminus K_d$ contains a closed
embedded essential surface for which the meridian is an accidental parabolic.\end{lemma}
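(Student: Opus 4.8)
The starting point is the essential tangle decomposition of $L$ recorded in \S\ref{frems}: the $4$-punctured sphere $S$ of Figure \ref{4PuncSphere} is incompressible in $S^3\setminus L$, its four punctures all lie on the component $J$, and tubing $S$ along an arc of $J$ produces a closed embedded essential surface $F$ carrying a curve isotopic to a meridian of $J$. I would lift this picture through the $d$-fold cyclic branched cover $p\colon S^3\to S^3$ branched over $J$, with $d$ odd. Since $S$ meets $J$ in exactly four points and the branching is cyclic of odd order $d$, the $4$-punctured sphere $S$ has its four punctures permuted by the deck group; the key combinatorial point is to track how the monodromy of the branched cover acts on these four boundary circles and hence to compute the number and type of components of $p^{-1}(S)$. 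Because $K$ (the knotted component) is disjoint from the branch locus and has linking number $2$ with $J$, its preimage $K_d=p^{-1}(K)$ is connected (this is exactly the mechanism used in \S\ref{basic}), so $S^3\setminus K_d = \H^3/\G_d$ is a knot complement.

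The main steps are then: (1) identify a component $\widetilde S$ of $p^{-1}(S)$ — or, if $p^{-1}(S)$ is connected, $\widetilde S$ itself — as a planar surface properly embedded in $S^3\setminus K_d$ whose boundary curves map to meridional circles of $K_d$ (the preimages of the cusp of $K$); (2) verify that $\widetilde S$ remains incompressible and boundary-incompressible upstairs. For incompressibility one can argue that a compressing disc would project to an immersed disc downstairs contradicting incompressibility of $S$, or more cleanly invoke the equivariant Dehn's lemma / the fact that the covering is induced by orbifold Dehn filling so that essential surfaces pull back to essential surfaces when the filling slopes are chosen generically (as in \S\ref{basic}, for $d$ large); (3) tube $\widetilde S$ along a lift of the tubing arc of $J$ inside the solid-torus neighborhood that fills the branch locus, obtaining a closed embedded essential surface $F_d \subset S^3\setminus K_d$; and (4) observe that the tubing arc is an arc of $p^{-1}(J)$, which becomes a core curve after branched filling, so the meridian circle on $\widetilde S$ that bounds a disc "through the tube" survives on $F_d$ as a curve isotopic into the cusp of $K_d$ — i.e. an accidental parabolic, and in fact one represented by a meridian of $K_d$ because the meridian of $K$ pulls back to the meridian of $K_d$ (the cusp of $K$ is unramified in the branched cover).

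I would phrase the surviving-curve argument via the standard picture of tubing: on the tubed surface $F_d$ there is an essential simple closed curve $c$ that cobounds, together with a meridian disc of the tube, an annulus; the core of the tube is the lift of the tubing arc of $J$, which after the $(d,0)$ orbifold filling is isotopic to the core of the filling solid torus, so $c$ is isotopic in $S^3\setminus K_d$ to a curve on that solid torus's boundary — but since the filling slope on $J$ is not the slope of $K$, one should instead track that $c$ is homotopic to a power of the meridian of $K_d$ using the linking/homology computation that makes $K_d$ a knot in $S^3$. The essentiality of $c$ on $F_d$ follows because $S$ was an essential $4$-punctured sphere, so the tube is essential.

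**Expected main obstacle.** The delicate point is not the topology of tubing but controlling what happens at the branch locus: one must be sure that after passing to the $d$-fold cyclic *branched* cover (equivalently, orbifold $(d,0)$-filling on $J$ followed by the associated cover) the surface $\widetilde S$ together with its tube is genuinely incompressible in the *closed-up* manifold $S^3\setminus K_d$, rather than merely in the cover of the link complement — i.e. that the tube does not become compressible once the branch solid torus is glued in, analogously to how $(1,n)$-filling compresses $F$ in \S\ref{frems}. The resolution is that $(d,0)$-filling on $J$ with $d\geq 3$ produces an orbifold in which the core of the filling torus is a cone axis of order $d$, so the filled-in side of $S$ is an orbifold rational-tangle-with-cone-point rather than a genuine rational tangle; this keeps $S$ essential, and one checks that this essentiality ascends to the manifold cover. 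Making this rigorous — probably by citing the orbifold theorem / equivariant sphere and disc theorems, or by the same argument Howie used downstairs applied equivariantly — is where the real work lies; everything else is bookkeeping with the covering combinatorics.
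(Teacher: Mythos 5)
Your overall strategy -- lift the essential tangle decomposition of $L$ to the $d$-fold cyclic branched cover, show the lifted $4$-punctured sphere stays incompressible, then tube to get a closed essential surface carrying a meridian -- is the same as the paper's, but the proposal has a genuine gap at exactly the step you flag as ``where the real work lies,'' and the combinatorial setup you start from would derail the tubing step. First, the setup: the paper isotopes $L$ so that the Conway sphere $S$ is disjoint from $J$ \emph{and} from a Seifert surface of $J$; its four punctures lie on $K$, not on $J$. Consequently $p^{-1}(S)$ is just $d$ disjoint copies $S', S'', \dots$ (no monodromy bookkeeping on boundary circles is needed), and the tube runs along $K_d$, so the curve carried by the tube is a meridian of $K_d$ -- which is the accidental parabolic asserted in the lemma. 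In your version the punctures sit on $J$ and the tube runs along a lift of $J$; but in the branched cover the branch locus is filled in, so a meridian of $J'$ bounds a disk and the tubed surface you describe compresses. You half-notice this at the end (``one should instead track that $c$ is homotopic to a power of the meridian of $K_d$''), but the fix is not homological bookkeeping -- it is choosing the tangle decomposition so that $S$ avoids $J$ entirely.

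Second, and more seriously, the incompressibility of the lifted surface is not something you can get from the soft tools you propose. A compressing disk for $S'$ in $S^3\setminus K_d$ may run through the re-glued branch solid torus, so it has no image in $S^3\setminus L$ and cannot be ruled out by incompressibility of $S$ downstairs; and the generic-slope/orbifold-filling route would at best give the statement for all but finitely many $d$ and still leaves open why essentiality in the orbifold filling passes to the lifted copies in the manifold cover. The paper's argument is elementary and works for every odd $d\geq 3$: one side of $S'$ is a homeomorphic lift of the essential tangle $(B,T)$, hence incompressible there; on the other side, the two strands of the outside tangle $T^*$ meet a second copy $S''$ in such a way that any compressing disk $D$ separating them must intersect $S''$, and an innermost circle of $D\cap S''$ bounds a disk that can bound on neither side of $S''$ -- on the tangle side because $(B'',T'')$ is again a lift of $(B,T)$, and on the other side because it would have to cross $S'$, contradicting that $D$ is a compressing disk. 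Without this (or an equally concrete substitute), the proposal does not establish the lemma.
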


\begin{proof} Performing an isotopy to the link $L$ results in the diagram shown in Figure \ref{Surface}.

\begin{figure}[htpb]
    \centering
    \includegraphics[scale=0.8]{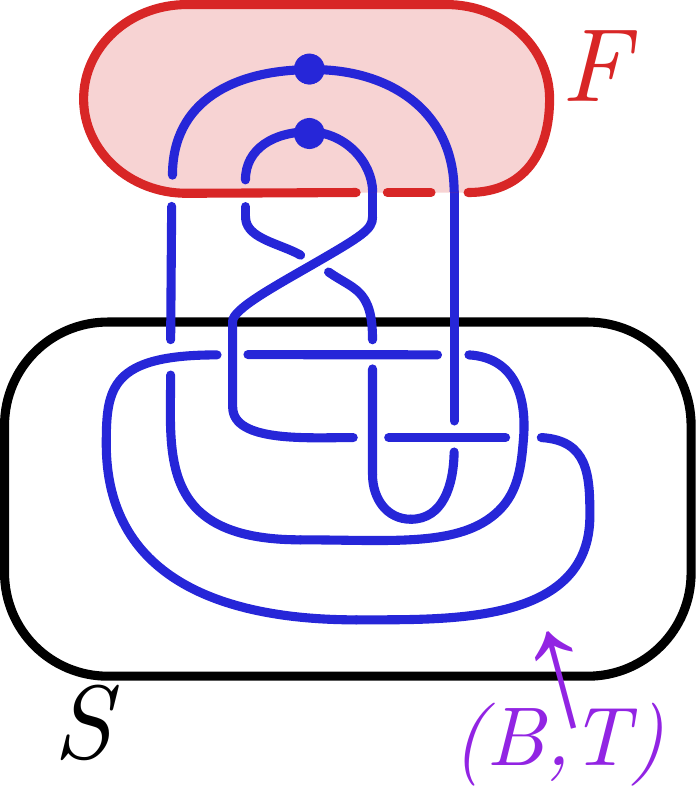}
    \caption{The link $L$ after isotopy. The shaded red surface $F$ is a Seifert surface. The black surface $S$ is a $4$-punctued sphere, and $(B,T)$ is the ball-tangle pair.}
    \label{Surface}
\end{figure}

The $d$-fold cyclic branched cover over $J$ that we may made use of in the proof Theorem \ref{main} can be described as follows. Cut along the Seifert surface $F$ shown in Figure \ref{Surface}, cyclically glue $d$ copies of the resulting piece, and then glue a solid torus $J'$ back in (where $J'$ is the lift of $J$). Since the $4$-punctured sphere $S$ is disjoint from $J$ and $F$, $S$ will lift to $d$ disjoint copies of itself, which we denote $S'$, $S''$, etc. Similarly the ball-tangle pair $(B,T)$ lifts to $d$ disjoint copies of itself denoted $(B',T')$, 
$(B'',T'')$, etc (see Figure \ref{lifts}). Note that since $T$ is an essential tangle, $S$ is incompressible to one side of $(B,T)$. \par
\begin{figure}[htpb]
    \centering
    \includegraphics[scale=0.5]{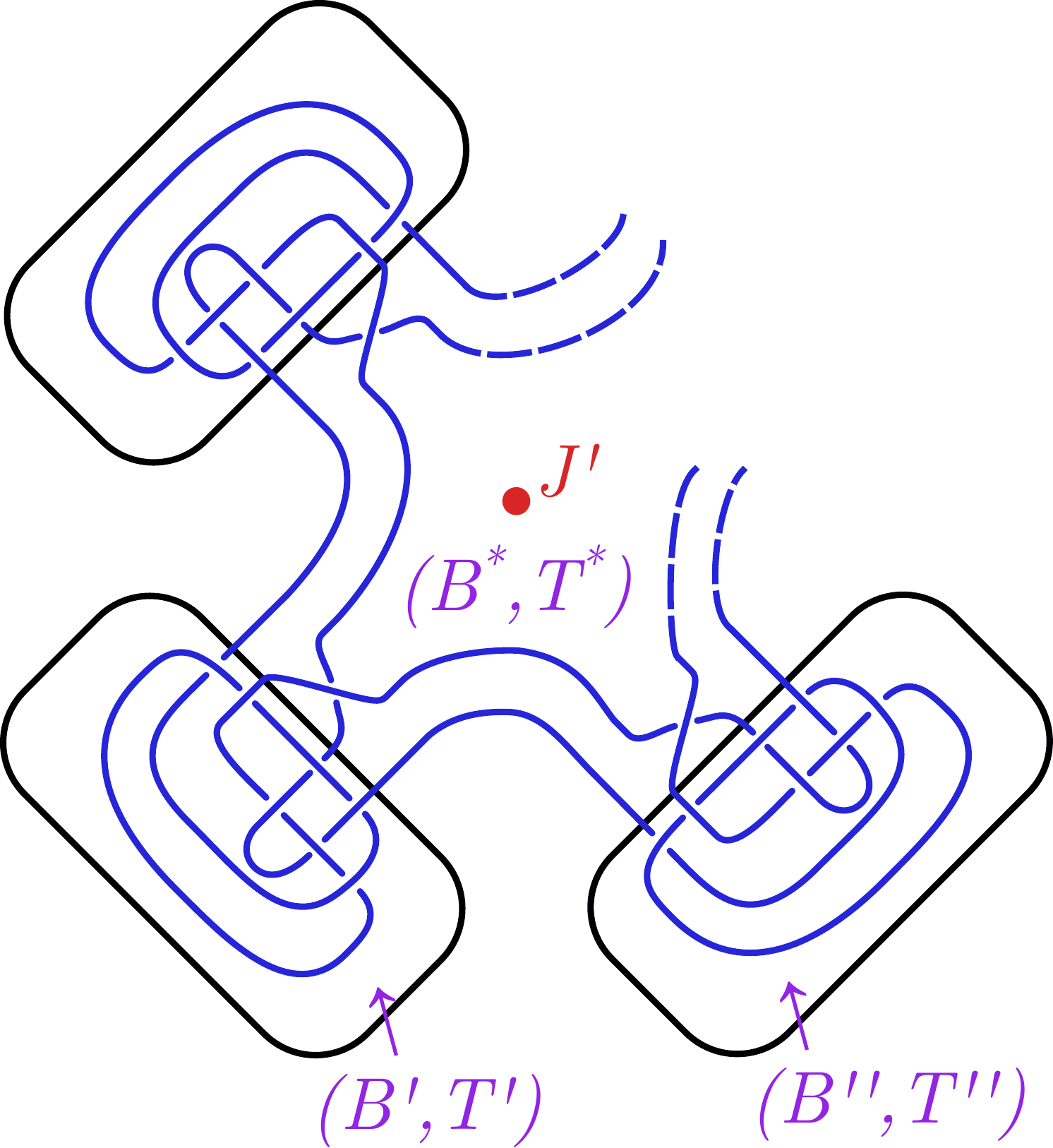}
    \caption{The $d$-fold cyclic branched cover over $J$.}
    \label{lifts}
\end{figure}
Now $d\geq 3$ is odd, so $S'$ is incompressible to one side of $(B',T')$, since $(B',T')$ is simply a lift of the ball-tangle pair $(B,T)$. It remains to show $S'$ is incompressible on the other side. As shown in Figure \ref{lifts}, we write $(B^*,T^*)$
for the ball-tangle pair on the other side of $S'$. Note that $T^*$ contains no closed components. Let $D$ be a compressing disk for $S'$ in $(B^*,T^*)$. Then $D$ must separate the two strands of $T^*$. It follows that
$D$ must intersect $S''$, otherwise it would fail to separate the two strands of $T''$ which belong to different strands of $T^*$.

Now consider the intersection pattern of $S''$ on $D$. Curves which are trivial on $S''$ can be removed, and the only curves which remain belong to a non-empty family of parallel curves which separate pairs of points on $S''$. Choosing an innermost such curve determines a loop in $D$ which bounds a disk $D'$ say. Now $D'$ cannot bound a disk in $(B'',T'')$ since as above, $(B'',T'')$ is simply a lift of the ball-tangle pair $(B,T)$. Moreover, $D'$ cannot bound a disk on the other side of $S''$, since arguing as above,
$D'$ would have to intersect $S'$. However, the interior of $D$ is disjoint from $S'$ (since it is a compressing disk), so $S'$ is incompressible to both sides, and therefore is essential in the complement of $K_d$.

We can then find at least one tubing of $S'$ that produces a closed embedded essential surface in the complement of $K_d$. By construction the meridian is an accidental parabolic.\end{proof}

We also note that there is a knot with non-integral trace for which the meridian cannot be an accidental parabolic.  The knot in question is 
{\tt{15n153789}} which appeared in \cite{DGR} (as an example of a ``barely large knot") and contains a unique closed embedded essential surface $S$ of genus $2$. Now 
\cite[Theorem 7.6]{DGR} shows that the meridian is not a boundary slope, and so $S$ cannot contain an essential simple closed curve isotopic to a meridian.
We do not know whether this surface carries an accidental parabolic. 
That it has non-integral trace can be checked using Snap or SnapPy.

\medskip

\noindent Given this discussion, it seems reasonable to ask:\\[\baselineskip]
\noindent {\bf Question 1:}~{\em Does every knot $K$ with non-integral trace have a complement that contains a closed embedded essential surface containing an accidental parabolic element?}\\[\baselineskip]
\noindent{\bf $2$-generator non-integral knots:}~A $3$-manifold $M$ is called $2$-generator if $\pi_1(M)$ can be generated by two elements.  A link $L\subset S^3$ is called $2$-generator if $\pi_1(S^3\setminus L)$
is $2$-generator.  The two links {\tt{L11n106}} and {\tt{L11n71}} considered in this paper, as well as the example of $m137$ are $2$-generator (which greatly facilitated computation). On the other hand, none of the $170$ examples listed in \S \ref{knot12} appear to be ``obviously" $2$-generator (using SnapPy), and in a previous version of this paper we asked whether 
there exists a hyperbolic knot $K\subset S^3$ with non-integral trace which is $2$-generator.

The following example was pointed out to us by K. Baker and N. Hoffman.  The manifold {\tt{v1980}} of the SnapPy census is homeomorphic to the complement of a Berge knot $K\subset S^3$ which they checked by SnapPy has non-integral trace. Indeed, in the terminology of \cite{Ba}, $K$ is a knot which lies on the fiber of the trefoil knot complement and so the knot arises from Berge's family VII.  

Being a Berge knot, $K$ is $2$-generator, and has a Lens Space Dehn filling. In particular, it is an {L}-space knot in the sense of Ozsv{\'a}th and Szab{\'o} \cite{OS}, and so this example also answers another question from an earlier version of this paper, namely
whether there exists a knot $K$ with non-integral trace that is an {L}-space knot in the sense of Ozsv{\'a}th and Szab{\'o}.\\[\baselineskip]
\noindent{\bf Non-triviality of the Alexander polynomial:}~The manifold $m137$ has trivial Alexander polynomial, however it can be checked from \cite{knot} for example, that none of the $170$ knots in \S \ref{knot12} have trivial Alexander polynomial. Moreover, as we now show, the knots constructed in Theorem \ref{main} also do not have trivial Alexander polynomial.

\begin{proposition}
\label{Alexnot1}
All the knots constructed in the proof of Theorem \ref{main} have non-trivial Alexander polynomial.\end{proposition}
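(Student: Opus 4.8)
The plan is to use the standard relationship between the Alexander polynomial of a knot obtained as a cyclic branched cover and the (multivariable or Torres-type) Alexander polynomial of the parent link, together with the classical observation that cyclic branched covers over the unknotted component $J$ with linking number $2$ realize the knots $K_d$. Concretely, write $\Delta_L(x,y)$ for the multivariable Alexander polynomial of $L = J \cup K$, where $x$ is the variable dual to $J$ and $y$ dual to $K$. Since $K_d$ is obtained by $(d,0)$-orbifold Dehn filling on $J$ and then passing to the $d$-fold cyclic cover, the Alexander polynomial $\Delta_{K_d}(t)$ can be computed from $\Delta_L$ by a Fox-calculus / Torres-condition argument: filling $J$ along the slope that kills the $d$-th power of the meridian corresponds to setting $x$ equal to a primitive $d$-th root of unity $\zeta_d$, and the resulting one-variable polynomial (up to units and the Torres correction factor coming from linking number $2$) computes $\Delta_{K_d}(y)$. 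The upshot is a formula expressing $\Delta_{K_d}(t)$ in terms of $\prod_{\zeta_d^k=1,\,k\neq 0}\Delta_L(\zeta_d^k, t)$ or, more cleanly, directly in terms of $\Delta_L(x,t)$ evaluated appropriately.

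The key computational input is therefore the multivariable Alexander polynomial of $L = \texttt{L11n106}$. I would compute $\Delta_L(x,y)$ directly from the presentation of $\Gamma$ given in \S\ref{pres} via Fox derivatives, using Mathematica, exactly as the character-variety computations were done; alternatively one can read it off from SnapPy/Snap. Since $K = 7_6$ has nontrivial Alexander polynomial (indeed $\Delta_{7_6}(t) = t^2 - 5t + 7$ up to units, which one can check from \cite{knot}), the Torres condition $\Delta_L(1,y) \doteq (y^{2}-1)\,\Delta_K(y)/(y-1)$ — here using that the linking number is $2$ — already shows $\Delta_L$ is nonzero and pins down a good deal of its structure. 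The final step is to show that the specialization producing $\Delta_{K_d}$ does not vanish: this is where one argues that for $d$ odd (and in the same infinite family of $d$ for which Proposition \ref{specialize} holds, or even all odd $d$) the relevant evaluation is a nonzero element of $\mathbb{Z}[t,t^{-1}]$, hence $\Delta_{K_d}$ is nontrivial.

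The main obstacle I expect is bookkeeping the precise normalization: cyclic branched covers introduce units $\pm t^k$ and, because the branch locus $J$ has linking number $2$ with $K$, there is a genuine Torres correction factor, so one must be careful that the "cyclotomic product" formula is being applied with the correct exponents and that no spurious cancellation is being claimed. A clean way to sidestep delicate sign/unit issues is to evaluate at a convenient specialization: for instance, show $\Delta_{K_d}(t)$ is nontrivial by checking $\Delta_{K_d}(-1) \neq \pm 1$, i.e. that the order of $H_1$ of the double cover of $S^3$ branched over $K_d$ is not $1$ — and this determinant can be computed from $\Delta_L$ evaluated at $(\zeta_d, -1)$, a purely arithmetic statement about an explicit integer polynomial in $\zeta_d$ that can be verified to be non-$\pm1$ for infinitely many $d$ (indeed all odd $d>1$) by elementary estimates or congruences. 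Either route — the general cyclotomic-product formula or the determinant shortcut — reduces the proposition to a finite, checkable computation with the explicit polynomial $\Delta_L$, and I expect the determinant shortcut to be the most robust.
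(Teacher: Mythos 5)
Your proposal is essentially the paper's argument: the authors compute $\Delta_L(u,v)$ with SnapPy and invoke Murasugi's product formula for periodic knots \cite[Proposition 4.1 \& Theorem 1]{Mur} (the branch locus being an unknot eliminates the correction factors you worry about), writing $\Delta_{K_d}$ as $\prod_{i=1}^{d-1}\Delta_L(\,\cdot\,,\zeta_d^i)$. They conclude non-triviality not by your determinant-at-$(-1)$ shortcut but by noting that each factor is linear in the surviving variable, so the product has degree $d-1$ with leading coefficient $\prod_{i=1}^{d-1}\zeta_d^i(\zeta_d^i-2)\bigl((\zeta_d^i)^3-\zeta_d^i+1\bigr)$, which is nonzero because neither $v-2$ nor $v^3-v+1$ has a root of unity as a root; your alternative finish would work equally well.
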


\begin{proof} The Alexander polynomial of the link $L$ used in the proof of Theorem \ref{main} can be computed in SnapPy using:

\begin{verbatim}
link=snappy.Link('L11n106')
link.alexander_polynomial()
\end{verbatim}

\noindent which gives.
$$\Delta_L(u,v) = u \left(v^5-2 v^4-v^3+3 v^2-2 v\right)-2 v^4+3 v^3-v^2-2 v+1,$$
where $u$ is the meridian of the unknotted component $J$.  Note that $v^5-2 v^4-v^3+3 v^2-2 v$ factors as $v(v-2)(v^3-v+1)$

As above, let $K_d$ denote the knot constructed in the proof of Theorem \ref{main} via the $d$-fold cyclic branched cover of $S^3$ branched over $J$, and where $d$ is assumed to be odd.  Using \cite[Proposition 4.1 \& Theorem 1]{Mur} and the fact that the branch locus is the unknot, it follows that the Alexander polynomial of $K_d$ is given by:
$$\Delta_{K_d}(u) = \prod_{i=1}^{d-1} \Delta_L(u,\zeta_d^i),$$
where $\zeta_d$ is a primitive $d$-th root of unity.

Note that this product produces a polynomial in $u$ of degree $d-1$ with leading coefficient $\prod_{i=1}^{d-1}\zeta_d^{i}(\zeta_d^{i}-2)({\zeta_d^{i}}^{3}-\zeta_d^{i}+1)$. Neither of $(\zeta_d^{i}-2)$ or $({\zeta_d^{i}}^{3}-\zeta_d^{i}+1)$ are factors of cyclotomic polynomials and so the product is never zero.  Hence $\Delta_{K_d}(u)\neq 1$ as required.\end{proof}

\noindent {\bf Question 3:}~{\em Does there exist a hyperbolic knot $K$ with non-integral trace and with trivial Alexander polynomial?}

%%%%%%%%%%%%%%%%%%%%%%%%%%%%%%%%%%%%%%%%

%%%%%%%%%%%%%%%%%%%%%%%%%%%%%%%%%%%%%%%%

\end{document}